\documentclass[11pt]{article}

\usepackage[a4paper,margin=1in]{geometry}
\usepackage{amsmath,amssymb,amsthm,mathtools}
\usepackage{microtype}
\usepackage{cite}
\usepackage[colorlinks=true,linkcolor=blue,citecolor=blue,urlcolor=blue]{hyperref}

\allowdisplaybreaks

\newtheorem{theorem}{Theorem}[section]
\newtheorem{lemma}[theorem]{Lemma}
\newtheorem{proposition}[theorem]{Proposition}

\newtheorem{definition}[theorem]{Definition}
\newtheorem{remark}[theorem]{Remark}

\newcommand{\mbX}{\mathcal X}
\newcommand{\mbY}{\mathcal Y}

\newcommand{\wt}{\operatorname{wt}}

\newcommand{\E}{\mathbb E}

\newcommand{\one}{\boldsymbol{1}}
\newcommand{\bx}{\boldsymbol{x}}
\newcommand{\by}{\boldsymbol{y}}

\newcommand{\bX}{\boldsymbol{X}}
\newcommand{\bY}{\boldsymbol{Y}}

\title{Improved Upper Bound for Lindstr\"om's Unique-Sum Problem}

\author{
Ruze Zhang
\thanks{Institute of Network Coding and Department of Information Engineering,
The Chinese University of Hong Kong,
Hong Kong SAR, China.
Email: \texttt{rzzhang@mail.nankai.edu.cn}.}
}

\date{}

\begin{document}
\maketitle

\begin{abstract}
Let $\{0,1\}^{n}$ denote the set of all $n$-dimensional vectors whose components are either $0$ or~$1$.
For any two nonempty subsets $\mathcal{X},\mathcal{Y}\subseteq\{0,1\}^{n}$,
the pair
$(\mathcal X,\mathcal Y)$  is called an $n$-dimensional
\emph{unique-sum pair} if each pair
$(\boldsymbol x,\boldsymbol y)\in\mathcal X\times\mathcal Y$
can be uniquely determined from the arithmetic sum
$\boldsymbol x+\boldsymbol y$.
Let $M(n)$ denote the maximum value of
$|\mathcal X||\mathcal Y|$ over all $n$-dimensional unique-sum pairs
$(\mathcal X,\mathcal Y)$.
In 1969, Lindstr\"om proved that
$$
\frac{1}{2}(1+\log_2 3)
\leq \lim_{n\to\infty}\frac{1}{n}\log_2 M(n)
\leq 3/2.
$$
Since then, the lower bound $(1+\log_2 3)/2$ has been successively improved, whereas the
upper bound $3/2$ has remained unchanged.
In this paper, we establish an explicit upper bound whose numerical value
is approximately $1.4884$.
To the best of our knowledge, this is the first strict improvement over
Lindstr\"om's upper bound $3/2$.
Towards this end, we develop a \emph{coordinate projection approach} that constructs a lower-dimensional \emph{unique-sum system} from a unique-sum pair.
By combining the results obtained from this approach with a relaxed
form of a necessary condition established by Ordentlich and Shayevitz
on unique-sum systems, we establish the above improved upper bound.
\end{abstract}

\section{Problem Formulation and Main Result}

Let $\{0,1\}^{n}$ denote the set of all $n$-dimensional vectors whose components are either $0$ or $1$.
For any two nonempty subsets $\mathcal{X},\mathcal{Y}\subseteq\{0,1\}^{n}$,  let
\begin{align*}
\mathcal{X}+\mathcal{Y}\triangleq\{\boldsymbol{x}+\boldsymbol{y}:~\boldsymbol{x}\in\mathcal{X},~\boldsymbol{y}\in\mathcal{Y}\}\subseteq \{0,1,2\}^{n},
\end{align*}
where $\boldsymbol{x}+\boldsymbol{y}$ denotes the componentwise \emph{arithmetic sum} of
$\boldsymbol{x}$ and $\boldsymbol{y}$.
Further,
the pair $(\mathcal{X},\mathcal{Y})$    is
called an \emph{$n$-dimensional unique-sum pair} if
the mapping $
(\boldsymbol{x},\boldsymbol{y})\longmapsto
\boldsymbol{x}+\boldsymbol{y}$
is injective on
$\mathcal{X}\times\mathcal{Y}$, or equivalently,
$
|\mathcal{X}||\mathcal{Y}|=
|\mathcal{X}+\mathcal{Y}|.$
For every positive integer~$n$, define
\begin{align}
M(n)
\triangleq
\max\Big\{
 |\mbX||\mbY|: ~\textup{$(\mathcal{X},\mathcal{Y})$    is
  an  $n$-dimensional unique-sum pair}
\Big\}.
\label{eq:def-Mn}
\end{align}

In 1969, Lindstr\"om~\cite{Lindstrom1969} considered the problem of estimating $M(n)^{1/n}$ as $n\to\infty$.
For this problem, Lindstr\"om~\cite[Thm.~1]{Lindstrom1969} showed that the limit
$
\lim_{n\to\infty}M(n)^{1/n}$
exists and established
the following classical lower and upper bounds on  this limit:
\begin{align}
6^{1/2}
\leq
\lim_{n\to\infty}M(n)^{1/n}
\leq 8^{1/2},\label{origi-bound}
\end{align}
where the upper bound $8^{1/2}$ was obtained by using an
information-theoretic  approach.
For convenience, the base-$2$ logarithm of   $\lim_{n\to\infty}M(n)^{1/n}$ is
referred to as the \emph{optimal rate} for unique-sum pairs and is
denoted by
\begin{align}
R
\triangleq\log \Big(\lim_{n\to\infty}M(n)^{1/n}\Big)=
\lim_{n\to\infty}\frac{1}{n}\log M(n).
\label{eq:def-Csum}
\end{align}
Throughout this paper, ``$\log$'' denotes ``$\log_2$'', the logarithm with base~$2$.
Accordingly, Lindstr\"om's bounds in \eqref{origi-bound} can be written as
\begin{align*}
\frac{1}{2}(1+\log 3)
\leq
R
\leq
\frac{3}{2}.
\end{align*}

The problem of determining the optimal rate $R$ for unique-sum pairs has been studied from
both combinatorial and information-theoretic perspectives.
In information theory,
the optimal rate $R$ arises naturally in the study of zero-error coding for the two-user binary adder channel, a fundamental multiple-access channel that
has been extensively studied since the late 1970s~\cite{Weldon1978}.
More precisely, a unique-sum pair is usually referred to as a
\emph{uniquely decodable code pair}
for the two-user binary adder channel, and the optimal rate $R$
is precisely the \emph{maximum zero-error sum rate} of this channel.
Determining the exact maximum zero-error sum rate remains a longstanding open problem.
Although several outer bounds on the \emph{zero-error capacity region} have been obtained by using various combinatorial and information-theoretic methods
\cite{Weldon1978,vanTilborg1983,UrbankeLi1998,
MattasOstergard2005,OrdentlichShayevitz2016,
AustrinEtAl2018,YuAnantharamChen2024}, none of them improves Lindstr\"om's upper bound $3/2$
on the maximum zero-error sum rate.
Consequently,
whether Lindstr\"om's upper bound $3/2$ could be strictly improved has remained an open problem for more than fifty years.

On the other hand, a series of constructions of unique-sum pairs  have been proposed to
improve the lower bound on the optimal rate~$R$, e.g.,
\cite{Weldon1978,Khachatrian1982,KasamiEtAl1983,
CoeberghVanDenBraakVanTilborg1985,AhlswedeBalakirsky1999,
MattasOstergard2005,BaloghEtAl2025}.
Recently, Balogh et al.~\cite{BaloghEtAl2025} obtained the best currently
known lower bound
$
R\geq 1.3184.
$
Thus, before the
present work, the best known bounds on the optimal rate~$R$ were
\begin{align*}
1.3184
\leq
R
\leq
\frac{3}{2}.
\end{align*}

In this paper, we establish a strict improvement over Lindstr\"om's  upper bound.
Before presenting this result, we introduce some notation.
The \emph{binary entropy function} is denoted by
\begin{align*}
h(p)
\triangleq
-p\log p-(1-p)\log(1-p),
\qquad
p\in[0,\,1],
\end{align*}
where $0\log0$ is taken to be~$0$.
Further, define
\begin{align}
\Gamma(\alpha)
\triangleq
1+h(\alpha)
-
2h\left(
\frac{1-\sqrt{1-2\alpha}}{2}
\right),
\qquad
\alpha\in[0,\,1/2].
\label{eq:def-h-hat}
\end{align}
It will be shown in Lemma~\ref{lem:function-properties} of
Section~\ref{sec:system-bound} that
$\Gamma$ is strictly decreasing on $[1/3,\,1/2]$
with
$\Gamma(1/3)>1/3$ and $\Gamma(1/2)=0$.
Hence, we let $\Gamma^{-1}$ denote
the inverse function of the restriction of~$\Gamma$ to $[1/3,\,1/2]$, which is well defined. In particular, we have
\begin{align}
\Gamma^{-1}(\beta)\in(1/3,\,1/2],
\qquad
\beta\in[0,\,1/3].\label{gamma-1}
\end{align}

The main result of this paper is stated as follows.

\begin{theorem}\label{thm:main}
For every $0<\alpha\leq1/4$, define  \begin{align}
R_{\alpha}\triangleq h(\alpha)
-\frac{1}{2}h(2\alpha)
+
(1-\alpha)
\bigg[
h\bigg(
\Gamma^{-1}
\Big(
\frac{\alpha}{1-\alpha}
\Big)
\bigg)
+
1
-
\Gamma^{-1}
\Big(
\frac{\alpha}{1-\alpha}
\Big)
\bigg].\label{def-R-alpha}
 \end{align}
Then, the optimal rate~$R$ for  unique-sum pairs satisfies
\begin{align*}
R \leq
\inf_{0<\alpha\leq1/4} R_{\alpha}.
\end{align*}
\end{theorem}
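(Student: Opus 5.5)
We reduce to an asymptotic statement. Lindström's argument shows $R=\lim_n \frac1n\log M(n)$ exists, so it suffices to fix $0<\alpha\le 1/4$ and an $n$-dimensional unique-sum pair $(\mbX,\mbY)$ and prove $\log(|\mbX||\mbY|)\le nR_\alpha+o(n)$. First, by a symmetrization or product argument (replacing $(\mbX,\mbY)$ with $(\mbX\times\mbY,\mbY\times\mbX)$ if needed), we may assume $|\mbX|=|\mbY|$. A standard pigeonholing over weight profiles lets us further assume $\mbX$ and $\mbY$ are "typical": each is contained in a single type class, at a cost of only $o(n)$ in rate.

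The main idea is a \emph{coordinate projection}. Pick a random set $S$ of about $2\alpha n$ coordinates, or more precisely fix a sum $\bx+\by$ pattern on it. For each $\boldsymbol z\in\{0,1,2\}^{S}$, restrict to the pairs whose sum projects to $\boldsymbol z$. On the coordinates where $z_i=1$ there is ambiguity about which of $x_i$ or $y_i$ is $1$. On the remaining $n-|S|$ coordinates the induced families form a \emph{unique-sum system}: a collection of pairs $(\mbX_j,\mbY_j)$ whose combined sum map is injective. The terms $h(\alpha)-\tfrac12 h(2\alpha)$ should be exactly the cost of the projection, counting how many patterns can be absorbed on roughly $\alpha n$ coordinates relative to $2\alpha n$. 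The remaining $(1-\alpha)$ fraction of coordinates carries a unique-sum system whose "density" parameter is $\beta=\alpha/(1-\alpha)\le 1/3$, which is why $\alpha\le1/4$ is required.

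Next we apply the relaxed Ordentlich--Shayevitz necessary condition, from Section~\ref{sec:system-bound}, to the lower-dimensional system. That condition states that if a unique-sum system on $m$ coordinates has density at least $\beta$, then its per-coordinate weight parameter $p$ satisfies $\Gamma(p)\ge\beta$. By Lemma~\ref{lem:function-properties}, $\Gamma$ is strictly decreasing on $[1/3,1/2]$, so we get $p\le\Gamma^{-1}(\beta)$. The size of the system is then at most $2^{m[h(p)+1-p]+o(m)}$, and the function $h(p)+1-p$ is increasing for $p\le 1/3$ and decreasing beyond. We therefore split into two cases:
\begin{itemize}
\item if $p\le 1/3$, the bound is below the value at $\Gamma^{-1}(\beta)>1/3$ anyway, since $h(1/3)+2/3=\log 3$, which is dominated by the threshold expression;
\item if $p>1/3$, we use monotonicity to replace $p$ by $\Gamma^{-1}(\beta)$.
\end{itemize}
Multiplying by $m=(1-\alpha)n$ and adding the projection cost gives $nR_\alpha+o(n)$. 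Taking the infimum over $\alpha$ then completes the proof.

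The main obstacle is the projection step. It must simultaneously ensure two things: that the projected families genuinely form a unique-sum system satisfying the hypotheses of the Ordentlich--Shayevitz condition, and that the loss is exactly $h(\alpha)-\tfrac12h(2\alpha)$ with density parameter $\alpha/(1-\alpha)$. I would first try choosing $S$ so that, on average, the number of coordinates where $\bx+\by$ equals $1$ within $S$ is controlled. I would then bound the size of each fibre by double counting over $\binom{2\alpha n}{\alpha n}$, which is the source of the $\tfrac12h(2\alpha)$ term after balancing $|\mbX|=|\mbY|$. Finally, I would use convexity (Jensen) over the fibres to pass from an average-density statement to one valid for a fixed system.
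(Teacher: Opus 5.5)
Your skeleton matches the paper's: project onto a set of coordinates, get a unique-sum system on $(1-\alpha)n$ coordinates with $r_0\le\alpha/(1-\alpha)$, then apply the relaxed Ordentlich--Shayevitz bound. But the step that produces the improvement is applied backwards. Proposition~\ref{cor:Gamma-system-bound} says that an \emph{upper} bound $r_0\le\beta$ forces $r_0+r_1+r_2\le L(\Gamma^{-1}(\beta))$. It is not of the form ``density at least $\beta$ implies $\Gamma(p)\ge\beta$, hence $p\le\Gamma^{-1}(\beta)$''. Moreover, $p\le\Gamma^{-1}(\beta)$ alone only gives $L(p)\le L(1/3)=\log 3$, because $L$ peaks at $1/3<\Gamma^{-1}(\beta)$. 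Your first case claims $\log 3$ is dominated by $L(\Gamma^{-1}(\beta))$, but in fact $L(\Gamma^{-1}(\beta))<\log 3$ strictly; this is exactly what the Remark after the theorem uses. In your second case, $1/3<p\le\Gamma^{-1}(\beta)$ together with the decrease of $L$ gives $L(p)\ge L(\Gamma^{-1}(\beta))$, which is the wrong inequality. With only $L\le\log 3$ you get $h(\alpha)-\frac12h(2\alpha)+(1-\alpha)\log 3$. That expression is decreasing on $(0,1/4]$ and equals $3/2$ at $\alpha=1/4$, so you recover Lindstr\"om's bound and nothing better. The gain comes from the second Ordentlich--Shayevitz constraint. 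The bound is $\max_p\min\{L(p),\widehat J(p)+\beta\}$: for $p<\Gamma^{-1}(\beta)$ the increasing term $\widehat J(p)+\beta\le\widehat J(\Gamma^{-1}(\beta))+\beta=L(\Gamma^{-1}(\beta))$ caps the rate, and only for $p\ge\Gamma^{-1}(\beta)$ does the decrease of $L$ take over. If you invoke the Proposition as stated, with $\beta=\alpha/(1-\alpha)$, no case split is needed.

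The projection step, which you flag as the main obstacle, is also not supplied, and your hints point the wrong way. The paper takes $|I|=\lfloor\alpha n\rfloor$, not $\approx 2\alpha n$; a set of size $2\alpha n$ would leave $(1-2\alpha)n$ coordinates, contradicting your own bookkeeping. It then conditions on the single pattern $\bx[I]+\by[I]=\one$. The system is $\{(\mbX_{I,\boldsymbol v},\mbY_{I,\one-\boldsymbol v})\}_{\boldsymbol v}$ restricted to $I^{\mathrm c}$. Injectivity is inherited because the sum on $I$ is constant, and $r_0\le|I|/(n-|I|)$ holds deterministically, so no Jensen step over fibres is needed.

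For such an $I$ to capture many pairs you need the normalization $\E[\wt(\bX\oplus\bY)]\ge n/2$, obtained by replacing $\mbX$ with $\{\one-\bx:\bx\in\mbX\}$ if necessary; your proposal lacks this. Double counting triples $(I,\bx,\by)$ and convexity of $w\mapsto\binom wk$ then give an $I$ holding at least a $\binom{\lfloor n/2\rfloor}{k}/\binom nk$ fraction of the pairs. That ratio is the source of $h(\alpha)-\frac12h(2\alpha)$. Your $\binom{2\alpha n}{\alpha n}$ has logarithm $\approx 2\alpha n$, which matches $\frac n2h(2\alpha)$ only at $\alpha=1/4$.

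Finally, a dyadic pigeonholing over $|\mbX_{I,\boldsymbol v}|\in[2^i,2^{i+1})$ and $|\mbY_{I,\one-\boldsymbol v}|\in[2^j,2^{j+1})$ equalizes the component sizes, as Definition~\ref{def-u-s-sys} requires, at only polynomial cost. Your reductions to $|\mbX|=|\mbY|$ and to a single type class play no role.
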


By Theorem~\ref{thm:main} and a numerical calculation, we obtain
\begin{align*}
R
\leq
\inf_{0<\alpha\leq1/4}R_{\alpha}
\approx
1.4884,
\end{align*}
where the numerical minimum is attained approximately at
$\alpha=0.1437$.
Thus,   the upper bound obtained in
Theorem~\ref{thm:main}   is strictly smaller than Lindstr\"om's  upper bound~$3/2$.
The following remark further shows that the upper bound in Theorem~\ref{thm:main}
is strictly smaller than Lindstr\"om's classical upper bound~$3/2$
without relying on the above numerical calculation.

\begin{remark}
We take  $\alpha=1/4$  and consider
\begin{align}
R_{1/4}
&=
h \Big(\,\frac{1}{4}\,\Big)
-\frac{1}{2}h \Big(\,\frac{1}{2}\,\Big)+
\frac{3}{4}
\bigg[
h\bigg(
\Gamma^{-1}
\Big(\,\frac{1}{3}\,\Big)
\bigg)
+
1
-
\Gamma^{-1}
\Big(\,\frac{1}{3}\,\Big)
\bigg].\label{R-1/4}
\end{align}
It is easy to verify that the function
$h(\lambda)+1-\lambda$
is strictly decreasing on $[1/3,\,1/2]$
(also see the proof of Lemma~\ref{lem:function-properties}
in Section~\ref{sec:system-bound}).
Together with $\Gamma^{-1}
 (1/3)\in(1/3,\,1/2]$ by \eqref{gamma-1}, this implies that
\begin{align}
h\Big(
\Gamma^{-1}
\Big(\,\frac{1}{3}\,\Big)
\Big)
+
1
-
\Gamma^{-1}
\Big(\,\frac{1}{3}\,\Big)<
h\Big(\,
\frac{1}{3}\,
\Big) +1-\frac{1}{3} =
\log 3.
\label{eq:eta0-upper-bound}
\end{align}
Combining \eqref{eq:eta0-upper-bound} with \eqref{R-1/4}, we obtain
\begin{align*}
R_{1/4} <
h\Big(\,\frac{1}{4}\,\Big)
-\frac{1}{2}h\Big(\,\frac{1}{2}\,\Big)+
\frac{3}{4}\log3
 =
\Big(2-\frac{3}{4}\log 3\Big)
-\frac{1}{2}
+\frac{3}{4}\log 3
 =
\frac{3}{2}.
\end{align*}
By Theorem~\ref{thm:main}, we immediately have
\begin{align*}
R \leq
\inf_{0<\alpha\leq1/4} R_{\alpha}\leq R_{1/4} <
\frac{3}{2}.
\end{align*}
This shows that  the upper bound obtained in Theorem~\ref{thm:main} is strictly smaller than
Lindstr\"om's   upper bound~$3/2$.
 \end{remark}

To establish the  upper bound on the optimal rate in
Theorem~\ref{thm:main}, we develop a
 coordinate projection approach that
constructs a lower-dimensional unique-sum system from a unique-sum pair.
To be specific, in Section~\ref{sec:system-bound}, we first introduce a
natural generalization of a unique-sum pair, referred to as a
unique-sum system, and derive a relaxed form of
a necessary condition established by Ordentlich and Shayevitz
on unique-sum systems.
In Section~\ref{sec:code-to-system}, we present the
 coordinate projection approach  for constructing a lower-dimensional unique-sum system
from a unique-sum pair.
Finally, in Section~\ref{sec:analytic-bound}, we establish the upper
bound by combining the results obtained from this approach with  the relaxed form of the  necessary condition on unique-sum systems derived
in Section~\ref{sec:system-bound}.

\section{Unique-Sum Systems}
\label{sec:system-bound}

In this section, we first introduce a natural generalization of a
unique-sum pair, referred to as a \emph{unique-sum system}, and  define the admissible triples for unique-sum systems.
We note that a unique-sum system is equivalent to a
\emph{multiset-union-free system} introduced by Ordentlich and
Shayevitz~\cite{OrdentlichShayevitz2016}.
We then recall a necessary condition established by Ordentlich and Shayevitz \cite{OrdentlichShayevitz2016}
for  admissible triples.
Finally, we derive a relaxed form of this necessary condition, which will be used to establish the upper bound in
Theorem~\ref{thm:main}.

\begin{definition}\label{def-u-s-sys}
Let $n$ and $t$  be two positive integers, and let $\mathcal{S}=\{(\mathcal{X}_{\ell},\mathcal{Y}_\ell)\}_{\ell=1}^t$ be a   collection
such that
$\mathcal{X}_{\ell}$ and $\mathcal{Y}_{\ell}$ are nonempty subsets of $\{0,1\}^n$
for every $1\leq \ell\leq t$.
Then, $\mathcal{S}$  is called an $n$-dimensional unique-sum system if
 \begin{align*}
 |\mathcal{X}_1|=|\mathcal{X}_2|=\cdots=
|\mathcal{X}_t| \quad\textup{ and }\quad
|\mathcal{Y}_1|=|\mathcal{Y}_2|=\cdots=
|\mathcal{Y}_t|;
\end{align*}
and
  the mapping $
(\ell,\bx,\by)
\longmapsto
\bx+\by$
is injective on the set
$$
\big\{(\ell,\bx,\by):\ 1\leq \ell\leq t,\ \bx\in\mbX_\ell,\ \by\in\mbY_\ell\big\}.
$$
\end{definition}

%
%

%

For an $n$-dimensional unique-sum system
$\mathcal{S}=\{(\mathcal X_{\ell},\mathcal Y_{\ell})\}_{\ell=1}^{t}$,
the \emph{rate tuple} of $\mathcal S$, denoted by $\big(r_0(\mathcal S),r_1(\mathcal S),r_2(\mathcal S)\big)$, is defined by
\begin{align*}
r_0(\mathcal S)&\triangleq\frac{1}{n}\log t,\quad
r_1(\mathcal S)\triangleq\frac{1}{n}\log|\mathcal X_1|,
\quad\text{and}\quad
r_2(\mathcal S)\triangleq\frac{1}{n}\log|\mathcal Y_1|,
\end{align*}
 where we note that
$|\mathcal X_1|=|\mathcal X_2|=\cdots=|\mathcal X_t|$ and
$|\mathcal Y_1|=|\mathcal Y_2|=\cdots=|\mathcal Y_t|$ by Definition~\ref{def-u-s-sys}.

%

Furthermore, a triple $(r_0,r_1,r_2)$ of nonnegative real numbers is
called \emph{admissible} for unique-sum systems if, for every
$\epsilon>0$ and every positive integer $N$, there exist an
integer $n\geq N$ and an $n$-dimensional unique-sum system
$\mathcal S_n$ such that
\begin{align*}
r_0&<r_0(\mathcal S_n)+\epsilon,\quad
r_1<r_1(\mathcal S_n)+\epsilon,\quad\text{and}\quad
r_2<r_2(\mathcal S_n)+\epsilon.
\end{align*}
 We remark that the above definition of an admissible triple for
unique-sum systems is equivalent to the definition of an admissible
triple for multiset-union-free systems used in~\cite{OrdentlichShayevitz2016}.

%

In the following, we recall the necessary condition for  admissible triples established in~\cite{OrdentlichShayevitz2016}.
Before that, we introduce some notation.
Let
\begin{align*}
 p\star q\triangleq p(1-q)+q(1-p),\qquad\forall~ p,\,q\in[0,\, 1].
\end{align*}
For any $\alpha\in[0,\, 1]$,  define
\begin{align}
 L(\alpha)\triangleq h(\alpha)+1-\alpha.
 \label{eq:def-L}
\end{align}
Furthermore, for every pair $(p,\alpha)$ satisfying
$
0\leq p\leq\alpha\leq 1/2,$
define
\begin{align}
 J(p,\alpha)
 \triangleq
 \begin{cases}
 \displaystyle
 ~2h \bigg(\frac{1-\sqrt{1-2\alpha}}{2}\bigg)-\alpha,
 &~\textup{if}~p\star p\leq\alpha,\\[4mm]
 \displaystyle
 ~2h \left(
 \frac12\bigg(
 1-\frac{1-\alpha-p\star p}{\sqrt{1-2(p\star p)}}
 \bigg)
 \right)
 -\frac12\left(
 1-\frac{(1-\alpha-p\star p)^2}{1-2(p\star p)}
 \right)
 &~\textup{if}~ p\star p>\alpha.
 \end{cases}
 \label{eq:def-J}
\end{align}

\begin{lemma}[\!\!{\cite[Lemma~3]{OrdentlichShayevitz2016}}]\label{lem:OS}
For each admissible triple $(r_0,r_1,r_2)$,\footnote{We note that every admissible triple $(r_0,r_1,r_2)$ for unique-sum systems satisfies
$0\leq r_1,r_2\leq1$.}
\begin{align*}
 r_0+r_1+r_2
 \leq
 \max_{h^{-1}(r_1)\leq\alpha\leq1/2}
 \min\big\{
 L(\alpha),~
 J\big(h^{-1}(r_1),\alpha\big)+r_0
 \big\},
\end{align*}
where  $h^{-1}$
denotes
the inverse function of the restriction of    $h$ to   $[0,\,1/2]$.
\end{lemma}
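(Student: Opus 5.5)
Since this lemma is quoted from~\cite[Lemma~3]{OrdentlichShayevitz2016}, the paper may simply cite it. If I had to reprove it, I would follow the entropy approach of that paper.

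\textbf{Setup.} Fix an $n$-dimensional unique-sum system $\mathcal S=\{(\mbX_\ell,\mbY_\ell)\}_{\ell=1}^t$ with rates close to $(r_0,r_1,r_2)$. Draw $\ell$ uniformly from $\{1,\dots,t\}$, and then $\bX$ uniformly from $\mbX_\ell$ and $\bY$ uniformly from $\mbY_\ell$, independently. Put $\boldsymbol Z=\bX+\bY$. By injectivity of $(\ell,\bx,\by)\mapsto\bx+\by$,
\begin{align*}
n(r_0+r_1+r_2)=H(\ell,\bX,\bY)=H(\boldsymbol Z).
\end{align*}
Let $\alpha$ be the expected fraction of coordinates with $Z_i=1$, i.e.\ the normalized expected Hamming distance between $\bX$ and $\bY$. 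The goal is to bound $H(\boldsymbol Z)/n$ by both $L(\alpha)$ and $J(h^{-1}(r_1),\alpha)+r_0$. Maximizing over the unknown $\alpha$ then gives the statement.

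\textbf{First bound: $H(\boldsymbol Z)\le nL(\alpha)+o(n)$.} Describe $\boldsymbol Z$ by two pieces of data. The first is the support of its $1$-entries, which costs about $nh(\alpha)$ bits by concavity of $h$ after averaging over coordinates. The second is a bit in $\{0,2\}$ on each of the remaining coordinates, costing at most $n(1-\alpha)$ bits. Symmetrizing $\alpha$ to the range $[0,1/2]$ loses nothing for the maximization.

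\textbf{Second bound: the $J$-term.} This is where $p=h^{-1}(r_1)$ enters. I would write
\begin{align*}
H(\boldsymbol Z)\le H(\ell)+H(\boldsymbol Z\mid \ell)=nr_0+H(\boldsymbol Z\mid\ell),
\end{align*}
so it suffices to show $H(\boldsymbol Z\mid\ell)\le nJ(p,\alpha)+o(n)$.

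For fixed $\ell$, the set $\mbX_\ell$ has size $2^{nr_1}=2^{nh(p)}$. By the Kleitman/Ahlswede--Katona-type statement that Hamming balls minimize average pairwise distance, two independent uniform elements of $\mbX_\ell$ are at normalized distance at least about $p\star p=2p(1-p)$ on average. The same holds for $\mbY_\ell$ if needed.

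Conditioned on $\ell$, uniqueness of sums means that a pair $\bx,\bx'$ disagreeing on a set $D$ cannot be compensated by $\by,\by'$ that disagree exactly oppositely on $D$. I would turn this into a constraint on the joint type of $(\bX,\bY,\bX',\bY')$ and then maximize $H(\boldsymbol Z\mid\ell)$ over single-letter distributions consistent with the two constraints: $P(Z_i=1)=\alpha$, and the pairwise-distance lower bound $p\star p$. I expect this optimization to produce the function $J$:
\begin{itemize}
\item When $p\star p\le\alpha$, the distance constraint is inactive. The optimum is then two symmetric i.i.d.\ components with crossover $\gamma=(1-\sqrt{1-2\alpha})/2$, where $\gamma\star\gamma=\alpha$. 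This gives $2h(\gamma)-\alpha$.
\item When $p\star p>\alpha$, the constraint is active, which explains the second branch of~\eqref{eq:def-J}.
\end{itemize}

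\textbf{Main obstacle.} The hard part is this second bound. Specifically, I need to extract a single-letter, tensorizable inequality that combines the uniqueness of sums with the minimum average distance forced by $|\mbX_\ell|=2^{nh(p)}$, and then carry out the two-regime optimization so that exactly $J$ emerges. As a first attempt, I would apply the chain rule coordinatewise with a time-sharing variable, and use the concavity of the resulting expressions in $\alpha$ to pass from averaged to per-coordinate quantities. Combining the two bounds and taking the worst case over $\alpha\in[h^{-1}(r_1),1/2]$ then yields the lemma. The restriction $\alpha\ge h^{-1}(r_1)$ would follow because the sums within a fixed $\ell$ must already separate $\mbX_\ell$.
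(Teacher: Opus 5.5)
The paper gives no proof of this lemma. It is quoted from \cite[Lemma~3]{OrdentlichShayevitz2016}, so your opening remark is exactly what the paper does. Read as a self-contained argument, however, your sketch has a genuine gap, and it sits precisely in the $p$-dependence of $J$.

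Your $L(\alpha)$ bound is fine. So is the first branch at the single-letter level: for independent bits with $a\star b=\alpha\le 1/2$, the entropy of the sum is maximized at $a=b=\gamma$, giving $\widehat J(\alpha)$. Here $a,b$ are the conditional probabilities that $X_i=1$ and $Y_i=1$ given $\ell$.

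For $p\star p>\alpha$, however, you only assert that the constrained optimization ``explains'' the second branch, and it does not produce it. Under your per-letter constraint $a\star a\ge p\star p$, the maximizer (up to complementing both bits) is $a=p$, $b=(\alpha-p)/(1-2p)$. The second branch of $J(p,\alpha)$ instead equals $f\big((u+v)/2\big)$, with $f$ as in the proof of Lemma~\ref{lem:J0-dominates}, $u=1-2p$ and $v=(1-2\alpha)/(1-2p)$. That is, it is the entropy of the sum of two i.i.d.\ Bernoulli$\big((a+b)/2\big)$ bits at these $a,b$, which is strictly larger by Schur-concavity (for $p=0.2$, $\alpha=0.3$: about $1.0746$ versus $1.0752$).

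A smaller single-letter value would not hurt by itself. The real missing step is tensorization. Both the crossover $\alpha$ and the rate constraint $r_1=h(p)$ are available only as averages over coordinates and over $\ell$ (via $H(\bX\mid\ell)\le\sum_i h(a_i)$ or an average-distance bound). So before Jensen applies, you need a single-letter bound that is jointly concave, and suitably monotone, in these averaged parameters. You neither choose such a bound nor check concavity, and that is the entire content of the $J$-term.

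Several secondary points also need repair.
\begin{itemize}
\item The constraint on the joint type of $(\bX,\bY,\bX',\bY')$ is not where $J$ comes from. Uniqueness of sums is already fully used in $H(\boldsymbol Z)=n(r_0+r_1+r_2)$, and $J$ is a bound on the entropy of a sum of two conditionally independent bits under a bias constraint.
\item The restriction $\alpha\ge h^{-1}(r_1)$ does not come from sums ``separating'' $\mbX_\ell$. It comes from $a\star b\ge\min\{a,1-a\}$, combined with $nr_1=H(\bX\mid\ell)\le\sum_i h(a_i)$ and the concavity of $h$.
\item Complementation is not available for systems: replacing each $\mbX_\ell$ by its complement need not preserve injectivity across different $\ell$. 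Handle $\alpha>1/2$ instead by $L(\alpha)\le L(1/2)=3/2=\min\{L(1/2),J(p,1/2)+r_0\}$.
\item Admissibility only guarantees $r_0(\mathcal S_n)>r_0-\epsilon$. You must first pass to a subcollection of about $2^{nr_0}$ pairs before using $H(\ell)=nr_0$.
\end{itemize}

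Finally, the paper never uses the $p$-dependence. Proposition~\ref{cor:Gamma-system-bound} immediately replaces $J$ by $\widehat J$ and enlarges the range to $[0,1/2]$. That relaxed inequality does follow completely from your plan. The maximal entropy of the sum of two independent bits with crossover $\alpha$ equals $\widehat J(\alpha)$ on $[0,1/2]$ and $L(\alpha)$ on $[1/2,1]$, and this glued function is concave. Hence $H(\boldsymbol Z)\le nr_0+\sum_i H(Z_i\mid\ell)$, together with Jensen, yields $\widehat J(\alpha)+r_0$ with no obstacle.
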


Now, we present a relaxed form of
Lemma~\ref{lem:OS} that will be used to establish the upper bound in
Theorem~\ref{thm:main}.

\begin{proposition}\label{cor:Gamma-system-bound}
For every $\beta\in[0,\,1/3]$, each admissible triple
$(r_0,r_1,r_2)$ with
$r_0\leq\beta$
 satisfies
\begin{align*}
r_0+r_1+r_2
\leq
L\big(\Gamma^{-1}(\beta)\big).
\end{align*}
\end{proposition}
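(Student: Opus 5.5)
The plan is to specialize Lemma~\ref{lem:OS} by dropping the dependence on $r_1$ through $p=h^{-1}(r_1)$. This reduces the right-hand side to a one-variable optimization in $\alpha$, governed exactly by the function $\Gamma$. Set
\[
J_0(\alpha)\triangleq 2h\Big(\tfrac{1-\sqrt{1-2\alpha}}{2}\Big)-\alpha ,
\]
which is the first branch of \eqref{eq:def-J}. Then $L(\alpha)-J_0(\alpha)=\Gamma(\alpha)$ by \eqref{eq:def-L} and \eqref{eq:def-h-hat}.

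\textbf{Step 1: show $J(p,\alpha)\le J_0(\alpha)$ for all $0\le p\le\alpha\le1/2$.} In the first branch this is an equality. For the second branch, write $s=p\star p\in(\alpha,1/2]$ and regard the expression as a function $F(s)$. At $s=\alpha$ we have $(1-2\alpha)/\sqrt{1-2\alpha}=\sqrt{1-2\alpha}$ and $\tfrac12\big(1-(1-2\alpha)\big)=\alpha$, so $F(\alpha)=J_0(\alpha)$ and the two branches match continuously. It therefore suffices to show $F'(s)\le 0$ on $(\alpha,1/2)$. I would do this by direct differentiation: put $u=(1-\alpha-s)/\sqrt{1-2s}$ and express both terms through $u$, so the sign reduces to comparing $\log\frac{1+u}{1-u}$ with a rational expression in $u$.

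If the computation becomes messy, there is a fallback. Intuitively, the bound of Lemma~\ref{lem:OS} can only weaken as $r_1$ decreases, because admissibility is monotone: lowering $r_1$ keeps a triple admissible. So one could instead apply the lemma with $r_1$ replaced by $0$, i.e.\ $p=0$, after lowering $r_1$. That does not directly bound the sum $r_0+r_1+r_2$, though, so the derivative route is preferred. I expect this step to be the main obstacle.

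\textbf{Step 2: monotonicity facts.} I will show that $J_0$ is nondecreasing on $[0,1/2]$. With $q=(1-\sqrt{1-2\alpha})/2$,
\[
J_0'(\alpha)=\frac{\log\frac{1-q}{q}}{\sqrt{1-2\alpha}}-1=\frac{\log\frac{1-q}{q}}{1-2q}-1 .
\]
Since $\log\frac{1-q}{q}\ge \frac{2}{\ln 2}(1-2q)$ for $q\le 1/2$, we get $J_0'(\alpha)\ge \frac{2}{\ln 2}-1>0$. I will also use that $L$ is decreasing on $[1/3,1/2]$, and that $\Gamma$ is decreasing there with $\Gamma(1/3)>1/3$ and $\Gamma(1/2)=0$; these are the facts promised by Lemma~\ref{lem:function-properties}.

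\textbf{Step 3: conclude.} Let $\alpha^*=\Gamma^{-1}(\beta)\in(1/3,1/2]$, so that $L(\alpha^*)=J_0(\alpha^*)+\beta$. By Lemma~\ref{lem:OS}, Step 1 and $r_0\le\beta$,
\[
r_0+r_1+r_2\le\max_{\alpha}\min\{L(\alpha),\,J_0(\alpha)+\beta\}.
\]
We split according to the position of $\alpha$.
\begin{itemize}
\item If $\alpha\le\alpha^*$, bound the minimum by $J_0(\alpha)+\beta$. Since $J_0$ is nondecreasing, this is at most $J_0(\alpha^*)+\beta=L(\alpha^*)$.
\item If $\alpha\ge\alpha^*>1/3$, bound the minimum by $L(\alpha)$. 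Since $L$ is decreasing on $[1/3,1/2]$, this is at most $L(\alpha^*)$.
\end{itemize}
The constraint $\alpha\ge h^{-1}(r_1)$ only restricts the range of $\alpha$, so it plays no role. This gives $r_0+r_1+r_2\le L(\Gamma^{-1}(\beta))$.
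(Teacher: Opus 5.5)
Your overall route is the paper's: apply Lemma~\ref{lem:OS}, dominate $J(p,\alpha)$ by $\widehat J(\alpha)$ (your $J_0$), then bound the resulting max--min by $L(\Gamma^{-1}(\beta))$. Steps~2 and~3 are leaner than the paper's and are correct.
\begin{itemize}
\item In Step~3 you prove only the upper bound $\min\{L(\alpha),J_0(\alpha)+\beta\}\le L(\alpha^*)$, by splitting at $\alpha^*$. That is all that is needed. The paper's Lemma~\ref{lem:Gamma-characterization} instead computes the max exactly, and for that it first reduces to $[1/3,1/2]$ using monotonicity of $L$ and $\widehat J$ on $[0,1/3]$.
\item In Step~2, your inequality $\log\frac{1-q}{q}\ge\frac{2}{\ln 2}(1-2q)$ gives $J_0'\ge\frac{2}{\ln2}-1>0$ directly. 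This replaces the paper's factorization $\widehat J=g_2\circ g_1$ and its second-derivative argument.
\end{itemize}

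Step~1, however, needs a repair: $F(s)$ is not defined on all of $(\alpha,1/2)$. The quantity $u(s)=(1-\alpha-s)/\sqrt{1-2s}$ tends to $\infty$ as $s\to 1/2$. For example, $\alpha=0.3$, $s=0.49$ gives $u\approx 1.48$, so $\frac12(1-u)<0$ and $h$ is undefined.

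You must work only on $s\in[\alpha,\,p\star p]$ and check $u\le1$ there. This is exactly where the hypothesis $p\le\alpha$ enters, which your Step~1 never uses. Since $u'(s)=\frac{s-\alpha}{(1-2s)^{3/2}}>0$ for $s>\alpha$, we get $u(s)\le u(p\star p)=\frac{1-\alpha-p\star p}{1-2p}\le\frac{(1-p)(1-2p)}{1-2p}=1-p\le1$. Also $u(s)\ge u(\alpha)=\sqrt{1-2\alpha}\ge 0$.

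With that in place, the derivative you flagged as the main obstacle is immediate. Write $F=f\circ u$ with $f(z)=2h\big(\frac{1-z}{2}\big)-\frac12(1-z^2)$. Then $f'(z)=z-\log\frac{1+z}{1-z}\le z\big(1-\frac{2}{\ln 2}\big)<0$ on $(0,1)$, so $F'(s)=f'(u)\,u'(s)\le0$ and hence $J(p,\alpha)=F(p\star p)\le F(\alpha)=J_0(\alpha)$. This is the paper's argument in different clothing. The paper avoids the path in $s$ by comparing endpoints, $z_0=\sqrt{1-2\alpha}\le z_1=u(p\star p)\le 1-p$, and then using that $f$ is decreasing on $[0,1]$.
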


\subsection{Proof of Proposition~\ref{cor:Gamma-system-bound}}

In this subsection, we prove Proposition~\ref{cor:Gamma-system-bound}.
We first introduce some notation and lemmas.
Let
\begin{align}
\widehat{J}(\alpha)
\triangleq
2h\left(
\frac{1-\sqrt{1-2\alpha}}{2}
\right)-\alpha,\qquad~\alpha\in[0,\,1/2].
\label{eq-widehat-J}
\end{align}
By the definitions of the functions $\Gamma$ and $L$  (cf.~\eqref{eq:def-h-hat} and \eqref{eq:def-L}),   we can see that
\begin{align*}
\Gamma(\alpha)=L(\alpha)-\widehat{J}(\alpha),\qquad~\alpha\in[0,\,1/2].
\end{align*}

The following lemma gives some properties of the functions $L$, $\widehat J$, and $\Gamma$.

\begin{lemma}\label{lem:function-properties}
The following properties hold:
\begin{itemize}
\item  $L$ is strictly increasing on $[0,\,1/3]$ and strictly decreasing on $[1/3,\,1]$;

\item  $\widehat J$ is strictly increasing on $[0,\,1/2]$;

\item  $
\Gamma =
L-\widehat J$
is strictly decreasing on $[1/3,\,1/2]$ with
$
\Gamma(1/3)>1/3$ and $
\Gamma(1/2)=0.$
\end{itemize}
\end{lemma}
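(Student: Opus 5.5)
We prove the three items by elementary calculus, differentiating each function and controlling the sign of the derivative.

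\textbf{The function $L$.} Since $h'(p)=\log\frac{1-p}{p}$, we have $L'(\alpha)=\log\frac{1-\alpha}{\alpha}-1=\log\frac{1-\alpha}{2\alpha}$. This is positive for $\alpha<1/3$, zero at $\alpha=1/3$, and negative for $\alpha>1/3$. This gives the first item immediately, with $L(1/3)=\log 3$.

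\textbf{The function $\widehat J$.} Put $q(\alpha)=\frac{1-\sqrt{1-2\alpha}}{2}\in[0,1/2]$. Then $1-2q=\sqrt{1-2\alpha}$, so $\alpha=2q(1-q)=q\star q$, and $q'(\alpha)=\frac{1}{2\sqrt{1-2\alpha}}=\frac{1}{2(1-2q)}$. Therefore
\begin{align*}
\widehat J'(\alpha)=\frac{1}{1-2q}\log\frac{1-q}{q}-1 .
\end{align*}
The claim $\widehat J'>0$ on $(0,1/2)$ is equivalent to $\log\frac{1-q}{q}>1-2q$ for $q\in(0,1/2)$. Set $g(q)=\log\frac{1-q}{q}-(1-2q)$. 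Then $g(1/2)=0$ and
\begin{align*}
g'(q)=-\frac{1}{\ln 2\, q(1-q)}+2<0,
\end{align*}
because $q(1-q)\le 1/4$ and $4/\ln 2>2$. So $g$ is strictly decreasing and hence positive on $(0,1/2)$. Continuity at the endpoints then gives strict monotonicity on the closed interval $[0,1/2]$.

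\textbf{The function $\Gamma$.} On $[1/3,1/2]$, $L$ is strictly decreasing and $\widehat J$ is strictly increasing, so $\Gamma=L-\widehat J$ is strictly decreasing. For the endpoint values:
\begin{itemize}
\item At $\alpha=1/2$: $q=1/2$, so $\Gamma(1/2)=1+h(1/2)-2h(1/2)=1+1-2=0$.
\item At $\alpha=1/3$: $\sqrt{1-2/3}=1/\sqrt3$ and $q_0=\frac{1-1/\sqrt3}{2}=\frac{3-\sqrt3}{6}\approx0.2113$. Hence $\Gamma(1/3)=1+h(1/3)-2h(q_0)$.
\end{itemize}
The inequality $\Gamma(1/3)>1/3$ is equivalent to $h(q_0)<\frac{1}{2}\bigl(\frac23+h(1/3)\bigr)=\frac12\log 3\approx 0.7925$. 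Numerically $h(q_0)\approx 0.7440$, so the margin is about $0.05$.

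\textbf{Main obstacle.} The delicate step is making this last inequality rigorous without relying on a calculator. Since $h$ is increasing on $[0,1/2]$, it suffices to bound $q_0$ above by a simple rational number $\bar q$ and check $h(\bar q)<\frac12\log3$. First try $\bar q=0.22$, which works because $q_0<0.212$ (from $\sqrt3>1.73$). Then verify $h(0.22)\approx0.760<0.79$ using crude rational bounds on the logarithms, such as $\log 3>1.58$ and elementary bounds on $\log(0.22)$ and $\log(0.78)$.
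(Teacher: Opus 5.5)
Your proof is correct and follows the paper's argument. Your $g(q)=\log\frac{1-q}{q}-(1-2q)$ is exactly half of the paper's $g_2'$, so your chain-rule computation of $\widehat J'$ and its sign argument coincide with the paper's decomposition $\widehat J=g_2\circ g_1$; the $L$ and $\Gamma$ parts are identical, except that your reduction to $h(q_0)<\frac12\log 3$ with rational bounds makes rigorous the value $\Gamma(1/3)\approx 0.4303$ that the paper only states numerically.
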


\begin{proof}
First,
for the function $L$ (cf.~\eqref{eq:def-L}), we calculate the first
derivative as
\begin{align*}
L'(\alpha)
=
\log\frac{1-\alpha}{\alpha}-1,\qquad \alpha\in(0,\,1).
\end{align*}
We can see that $L'(\alpha)>0$ for all
$\alpha\in(0,\,1/3)$, and $L'(\alpha)<0$ for all
$\alpha\in(1/3,\,1)$.
Therefore,~$L$ is strictly increasing on $[0,\,1/3]$ and strictly decreasing on $[1/3,\,1]$.

In the following, we consider the function   $\widehat{J}$ (cf.~\eqref{eq-widehat-J}).
Define the following  two functions:
\begin{alignat*}{2}
g_1(\alpha)&\triangleq
\frac{1-\sqrt{1-2\alpha}}{2},&\qquad ~~\alpha&\in[0,\,1/2],\\
g_2(\alpha)&\triangleq 2h(\alpha)-2\alpha(1-\alpha),&\qquad \alpha&\in[0,\,1/2].
\end{alignat*}
Together with the definition of $\widehat J$, we can see that
\begin{align*}
\widehat J(\alpha)=
g_2\big(g_1(\alpha)\big),
\qquad
\alpha\in[0,\,1/2].
\end{align*}
To show that $\widehat J$ is strictly increasing on $[0,\,1/2]$, it suffices to prove that both $g_1$ and $g_2$
are strictly increasing on $[0,\,1/2]$.
First,
it is easy to see that   $g_1$ is strictly increasing on $[0,\,1/2]$.
We next prove that the function $g_2$ is strictly increasing on
$[0,\,1/2]$.
To be specific, we calculate the first derivative as
\begin{align*}
g_2'(\alpha)
=
2\log\frac{1-\alpha}{\alpha}
-2+4\alpha,
\qquad \alpha\in(0,\,1/2).
\end{align*}
To show that $g_2'(\alpha)>0$ for every
$\alpha\in(0,\,1/2)$, we further calculate the second derivative as
\begin{align*}
g_2''(\alpha)
&=
4-\frac{2}{\ln 2}
\left(
\frac{1}{\alpha}
+
\frac{1}{1-\alpha}
\right)
<0,
\qquad \alpha\in(0,\,1/2).
\end{align*}
This immediately implies that
$g_2'(\alpha)>g_2'(1/2)=0$ for every
$\alpha\in(0,\,1/2)$.
Hence, the function~$g_2$ is strictly increasing on
$[0,\,1/2]$.

Finally,
since $L$ is strictly decreasing on $[1/3,\,1/2]$ and
$\widehat J$ is strictly increasing on $[1/3,\,1/2]$, the function
$
\Gamma =
L-\widehat J$  is strictly decreasing on $[1/3,\,1/2]$.
 Furthermore, by a direct calculation, we have
\begin{align*}
L(1/3)-\widehat{J}(1/3)
&=
\log3+\frac13
-
2h\bigg(
\frac{1-1/\sqrt3}{2}
\bigg)\approx 0.4303
>
\frac13.\\
L(1/2)-\widehat{J}(1/2)
&=0.
\end{align*}
 The lemma is thus proved.
\end{proof}

\begin{lemma}\label{lem:J0-dominates}
For any $0\leq p\leq\alpha\leq1/2$,
\begin{align*}
J(p,\alpha)
\leq  \widehat{J}(\alpha).
\end{align*}
In particular, $J(p,\alpha)
= \widehat{J}(\alpha)$ whenever $p=0$ and $\alpha\in[0,\,1/2]$.
\end{lemma}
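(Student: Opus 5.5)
The plan is to reduce everything to the monotonicity of the auxiliary function $g_2(x)=2h(x)-2x(1-x)$, which the proof of Lemma~\ref{lem:function-properties} showed to be strictly increasing on $[0,\,1/2]$. That proof also gives $\widehat J(\alpha)=g_2\big(g_1(\alpha)\big)$ with $g_1(\alpha)=\frac{1-\sqrt{1-2\alpha}}{2}$. I will show that the second branch of $J$ in \eqref{eq:def-J} is also $g_2$ evaluated at some point, and that this point is at most $g_1(\alpha)$.

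\textbf{Case split.} Write $q\triangleq p\star p=2p(1-p)$, so $q\in[0,\,1/2]$ because $p\le 1/2$. If $q\le\alpha$, then $J(p,\alpha)=\widehat J(\alpha)$ by definition. When $p=0$ we have $q=0\le\alpha$, which gives the ``in particular'' claim. So assume $q>\alpha$.

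\textbf{Rewriting the second branch.} Set
\begin{align*}
u\triangleq\frac{1-\alpha-q}{\sqrt{1-2q}},\qquad x\triangleq\frac{1-u}{2}.
\end{align*}
Since $1-u^2=4x(1-x)$, the second branch equals $2h(x)-2x(1-x)=g_2(x)$. Because $g_2$ is increasing on $[0,\,1/2]$, it suffices to prove
\begin{align*}
0\le x\le g_1(\alpha)\le 1/2 .
\end{align*}

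\textbf{Key inequality $x\le g_1(\alpha)$.} This is equivalent to $u\ge\sqrt{1-2\alpha}$. We have $u\ge 0$ because $\alpha+q\le 1$. So it suffices to check $(1-\alpha-q)^2\ge(1-2\alpha)(1-2q)$. Expanding both sides, the difference is exactly $(\alpha-q)^2\ge 0$.

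\textbf{Main obstacle: $x\ge 0$.} This says $u\le 1$, i.e., the argument of $h$ lies in the domain where $g_2$ is defined and monotone. Here the hypothesis $p\le\alpha$ must be used. The inequality $(1-\alpha-q)^2\le 1-2q$ is equivalent to $(q+\alpha)^2\le 2\alpha$. Since $q=2p(1-p)\le 2\alpha(1-\alpha)$, it is enough to show
\begin{align*}
3\alpha-2\alpha^2\le\sqrt{2\alpha}\quad\text{on }[0,\,1/2],
\end{align*}
that is, $\sqrt{\alpha}\,(3-2\alpha)\le\sqrt2$. The left side is increasing on $[0,\,1/2]$ (its derivative is $\frac{3-6\alpha}{2\sqrt{\alpha}}\ge 0$), and it equals $\sqrt2$ at $\alpha=1/2$, so the inequality holds. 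I expect this domain check to be the only delicate point.

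\textbf{Conclusion.} Combining these, $J(p,\alpha)=g_2(x)\le g_2\big(g_1(\alpha)\big)=\widehat J(\alpha)$.
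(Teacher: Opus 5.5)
Your proposal is correct and is essentially the paper's argument: your $g_2(x)$ with $x=(1-u)/2$ is just the paper's $f(z)=g_2\big((1-z)/2\big)$ after a change of variable, and your key inequality $u\ge\sqrt{1-2\alpha}$, obtained from the difference $(\alpha-q)^2$, is exactly the paper's $z_0\le z_1$. The one real difference is the domain check $u\le 1$. The paper does it more directly: it notes $\sqrt{1-2q}=1-2p$ and $1-\alpha-q\le 1-p-q=(1-p)(1-2p)$, which gives $u\le 1-p$, whereas you use the calculus bound $\sqrt{\alpha}\,(3-2\alpha)\le\sqrt2$.
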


\begin{proof}
For the case $p\star p\leq\alpha$,
it follows from the definitions of
$J(p,\alpha)$ and $\widehat{J}(\alpha)$ (cf.~\eqref{eq:def-J} and~\eqref{eq-widehat-J}) that $J(p,\alpha)
= \widehat{J}(\alpha)$. In particular,  $J(p,\alpha)
= \widehat{J}(\alpha)$  whenever $p=0$ and $\alpha\in[0,\,1/2]$, because $p\star p=0\leq\alpha$.

It remains to prove  $J(p,\alpha)
\leq  \widehat{J}(\alpha)$ for  the case $ p\star p>\alpha.$
Define the following function:
\begin{align*}
f(z)
\triangleq
2h \bigg(\frac{1-z}{2}\bigg)
-\frac{1}{2}(1-z^2),
\qquad  z\in [0,\,1].
\end{align*}
For notational simplicity,  let
\begin{align}
z_0\triangleq\sqrt{1-2\alpha}\quad\text{ and }\quad
z_1\triangleq\frac{1-\alpha-(p\star p)}{\sqrt{1-2(p\star p)}},\label{def-z-z0}
\end{align}
where it will be shown below that $0\leq z_0\leq z_1\leq 1$.
Note that $ p\star p>\alpha$.
Together with the definitions of $J$ and $\widehat{J}$ in \eqref{eq:def-J} and \eqref{eq-widehat-J},
we can verify that
\begin{align}
\widehat{J}(\alpha)=f(z_0)\quad\text{ and }\quad J(p,\alpha)=f(z_1).\label{eq-equ-form}
\end{align}
To prove $J(p,\alpha)
\leq  \widehat{J}(\alpha)$, by \eqref{eq-equ-form} it is equivalent to proving $f(z_1)\leq f(z_0)$.

We first prove that
the function $f$ is strictly decreasing on $[0,\,1]$.
To be specific,
we calculate the first
derivative as
\begin{align*}
f'(z)
=
z-\log\frac{1+z}{1-z},
\qquad  z\in (0,\,1).
\end{align*}
To show that $f'(z)<0$ for every $z\in(0,\,1)$, we further calculate the second
derivative as
\begin{align*}
f''(z)
&=
1-\frac{1}{\ln 2}
\left(
\frac{1}{1+z}
+
\frac{1}{1-z}
\right) =1-
\frac{2}{(1-z^2)\ln 2} <0,
\qquad z\in(0,\,1).
\end{align*}
This immediately implies that $f'(z)<f'(0)=0$ for every $z\in(0,\,1)$. Hence, the function $f$ is strictly decreasing on $[0,\,1]$.

Next, we prove that
\[
0\leq z_0\leq z_1\leq 1,
\]
where we recall that $z_0$ and $z_1$ are defined in
\eqref{def-z-z0}.
It first follows from $\alpha\leq 1/2$ that
$z_0\geq 0$.
Since $p\star p>\alpha$ and
$0\leq p\leq\alpha\leq1/2$, we have
$
p\star p<1/2$ and $
\alpha<1/2.$
This  implies that  $z_1>0$.
Furthermore, we consider
\begin{align*}
z_0^2-z_1^2
&=
\frac{(1-2\alpha)\big(1-2(p\star p)\big)-\big(1-\alpha-(p\star p)\big)^2}{1-2(p\star p)}
=
-\frac{\big((p\star p)-\alpha\big)^2}
{1-2(p\star p)}
\leq0,
\end{align*}
where we note  that  $
1-2(p\star p)
=
(1-2p)^2
>
0,$
because the condition
$p\star p>\alpha$ together with
$0\leq p\leq\alpha\leq1/2$
implies that $p<1/2$.
This implies that   $
z_0\leq z_1$.
Since $0\leq p\leq\alpha\leq 1/2$, we also have
\begin{align*}
z_1=
\frac{1-\alpha-(p\star p)}
{\sqrt{1-2(p\star p)}}\leq
\frac{1-p-(p\star p)}
{\sqrt{1-2(p\star p)}}=
\frac{(1-p)(1-2p)}
{1-2p}=
1-p
\leq1,
\end{align*}
where the second equality follows from
$1-p-(p\star p)=1-p-2p(1-p)=(1-p)(1-2p)$ and
\begin{align*}
\sqrt{1-2(p\star p)}=
\sqrt{1-4p(1-p)}=\sqrt{(1-2p)^2}=1-2p.
\end{align*}

Based on the above,   the function $f$ is strictly decreasing on $[0,\,1]$, and $0\leq z_0\leq z_1\leq 1.$ This immediately implies that $f(z_1)\leq
f(z_0)$.
The lemma is thus proved.
\end{proof}

%
%
%

\begin{lemma}\label{lem:Gamma-characterization}
For any   $\beta\in[0,\,1/3]$,
\begin{align*}
\max_{0\leq\alpha\leq1/2}
\min\big\{
L(\alpha),~
\widehat{J}(\alpha)+\beta
\big\}=
L\big(\Gamma^{-1}(\beta)\big).
\end{align*}
\end{lemma}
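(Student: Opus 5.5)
Set $\alpha^*\triangleq\Gamma^{-1}(\beta)$, which lies in $(1/3,\,1/2]$ by \eqref{gamma-1} and satisfies $\Gamma(\alpha^*)=\beta$. Since $\Gamma=L-\widehat J$, this is the identity
\begin{align*}
L(\alpha^*)=\widehat J(\alpha^*)+\beta .
\end{align*}
Write $\phi(\alpha)\triangleq\min\{L(\alpha),\,\widehat J(\alpha)+\beta\}$. Evaluating at $\alpha^*$ gives $\max_{0\le\alpha\le1/2}\phi(\alpha)\ge\phi(\alpha^*)=L(\alpha^*)$. It therefore remains to show that $\phi(\alpha)\le L(\alpha^*)$ for every $\alpha\in[0,\,1/2]$. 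I split this according to the position of $\alpha$ relative to $\alpha^*$.

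\emph{Case $\alpha\ge\alpha^*$.} Here $\alpha\in[\alpha^*,\,1/2]\subseteq[1/3,\,1/2]$. By Lemma~\ref{lem:function-properties}, $L$ is strictly decreasing on $[1/3,\,1]$, so $\phi(\alpha)\le L(\alpha)\le L(\alpha^*)$.

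\emph{Case $\alpha<\alpha^*$.} By Lemma~\ref{lem:function-properties}, $\widehat J$ is strictly increasing on $[0,\,1/2]$. Hence $\phi(\alpha)\le\widehat J(\alpha)+\beta\le\widehat J(\alpha^*)+\beta=L(\alpha^*)$, where the last equality is the defining identity of $\alpha^*$. This second case is exactly why the argument avoids splitting at $1/3$ (the maximizer of $L$): on $[0,\,\alpha^*)$ the bound comes from the $\widehat J$ branch, so the unimodal shape of $L$ there is irrelevant.

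Combining the two cases yields equality. The only delicate point is checking that $\Gamma^{-1}(\beta)$ is well defined and lies in $[1/3,\,1/2]$ for every $\beta\in[0,\,1/3]$. This follows from Lemma~\ref{lem:function-properties}: $\Gamma$ is continuous and strictly decreasing on $[1/3,\,1/2]$ with $\Gamma(1/3)>1/3\ge\beta\ge0=\Gamma(1/2)$, so the intermediate value theorem supplies the required preimage. This membership is what licenses the use of monotonicity of $L$ on $[\alpha^*,\,1/2]$ in the first case. No real obstacle is expected beyond this bookkeeping.
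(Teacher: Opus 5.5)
Your proof is correct, but it is organized differently from the paper's. The paper first uses that both $L$ and $\widehat J$ are increasing on $[0,1/3]$ to restrict the maximization to $[1/3,1/2]$. It then uses the monotonicity of $\Gamma$ on both sides of $\Gamma^{-1}(\beta)$ to determine exactly which branch attains the minimum: $\widehat J+\beta$ on $[1/3,\Gamma^{-1}(\beta))$ and $L$ on $[\Gamma^{-1}(\beta),1/2]$. It concludes from the resulting increasing-then-decreasing shape. You instead split directly at $\alpha^*=\Gamma^{-1}(\beta)$ and bound $\min\{L,\widehat J+\beta\}$ by whichever branch is convenient on each side. So you never need to know which branch is active, and you never need $L$ to be increasing on $[0,1/3]$. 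You use $\Gamma$ only to locate $\alpha^*\ge 1/3$ and to obtain the crossing identity $L(\alpha^*)=\widehat J(\alpha^*)+\beta$. Your route is shorter and relies on strictly fewer facts. The paper's route yields a complete description of the function being maximized (which branch is active where, and its unimodal shape), which is more than the statement requires. Your explicit intermediate-value argument for the existence of $\Gamma^{-1}(\beta)$ also makes precise a continuity point that the paper leaves implicit when it calls $\Gamma^{-1}$ well defined on $[0,1/3]$.
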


\begin{proof}
Fix an arbitrary $\beta\in[0,\,1/3]$.
Since both the functions $L$ and $\widehat J$ are strictly increasing on
$[0,\,1/3]$ (cf.~Lemma~\ref{lem:function-properties}), the function
$
\min\big\{
L(\alpha),\,
\widehat J(\alpha)+\beta
\big\}$
is also increasing on $[0,\,1/3]$. This implies that
\begin{align}
\max_{0\leq\alpha\leq1/2}
\min\big\{
L(\alpha),~
\widehat{J}(\alpha)+\beta
\big\}=\max_{1/3\leq\alpha\leq1/2}
\min\big\{
L(\alpha),~
\widehat{J}(\alpha)+\beta
\big\}.\label{le-eq}
\end{align}

In the following, we consider the   function
$
\min\big\{
L(\alpha),\,
\widehat J(\alpha)+\beta
\big\}$ on $[1/3,\,1/2]$.
It follows from Lemma~\ref{lem:function-properties} that $
\Gamma$
is strictly decreasing on $[1/3,\,1/2]$ with
$
\Gamma(1/3)>1/3$ and $
\Gamma(1/2)=0.$
Since $\beta\in[0,\,1/3]$, we have  $\Gamma^{-1}(\beta)\in [1/3,\,1/2\big]$. By the monotonicity of $\Gamma$,
we thus obtain
\begin{align*} \Gamma(\alpha)
\geq\Gamma\big(\Gamma^{-1}(\beta)\big)=
\beta
\qquad~~&\textup{if}~
\alpha\in
\big[1/3,~\Gamma^{-1}(\beta)\big];\\[2mm]
~\Gamma(\alpha)
\leq
\Gamma\big(\Gamma^{-1}(\beta)\big)=
\beta
\qquad~~&\textup{if}~
\alpha\in
\big[\Gamma^{-1}(\beta),~1/2\big].
\end{align*}
This, together with   $\Gamma=L-\widehat J$, further implies that
\begin{align}
~L(\alpha)
 \geq
\widehat J(\alpha)+\beta
\qquad~~&\textup{if}~
\alpha\in
\big[1/3,~\Gamma^{-1}(\beta)\big]; \label{case-eq-1}\\[2mm]
~L(\alpha)
 \leq
\widehat J(\alpha)+\beta
\qquad~~&\textup{if}~
\alpha\in
\big[\Gamma^{-1}(\beta),~1/2\big].  \label{case-eq-2}
\end{align}
We can see from \eqref{case-eq-1} and \eqref{case-eq-2} that
\begin{align*}
L\big(\Gamma^{-1}(\beta)\big)
=
\widehat J\big(\Gamma^{-1}(\beta)\big)+\beta.
\end{align*}
By \eqref{case-eq-1} and \eqref{case-eq-2}, the function
$\min\big\{L(\alpha),\,\widehat J(\alpha)+\beta\big\}$
can be written as
\begin{align}
\min\big\{
L(\alpha),\,
\widehat J(\alpha)+\beta
\big\}=\begin{cases}
\widehat J(\alpha)+\beta &\quad\textup{if}~\alpha\in
\big[1/3,~\Gamma^{-1}(\beta)\big);\\[2mm]
L(\alpha) &\quad\textup{if}~\alpha\in
\big[\Gamma^{-1}(\beta),~1/2\big].
\end{cases}\label{equiv-form}
\end{align}
We note that  $\widehat J$ is strictly increasing on
$[0,\,1/2]$ and $L$ is strictly decreasing on
$[1/3,\,1]$ (cf.~Lemma~\ref{lem:function-properties}).
By \eqref{equiv-form}, we can see that
 the function $
\min\big\{
L(\alpha),~
\widehat J(\alpha)+\beta
\big\}$
is strictly increasing on
$\big[1/3,\,\Gamma^{-1}(\beta)\big)$
and strictly decreasing on
$\big[\Gamma^{-1}(\beta),\,1/2\big]$.
Hence,   we obtain
\begin{align*}
\max_{1/3\leq\alpha\leq1/2}
\min\big\{
L(\alpha),~
\widehat{J}(\alpha)+\beta
\big\}
=
L\big(\Gamma^{-1}(\beta)\big).
\end{align*}
This, together with \eqref{le-eq}, completes the proof of the lemma.
\end{proof}

We now prove Proposition~\ref{cor:Gamma-system-bound}.

\begin{proof}[Proof of Proposition~\ref{cor:Gamma-system-bound}]
Fix an arbitrary $\beta\in[0,\,1/3]$ and an admissible triple
$(r_0,r_1,r_2)$ satisfying $r_0\leq\beta$.
By Lemma~\ref{lem:OS}, we first have
\begin{align}
r_0+r_1+r_2
&\leq
\max_{h^{-1}(r_1)\leq\alpha\leq1/2}
\min\big\{
L(\alpha),~
J\big(h^{-1}(r_1),\alpha\big)+r_0
\big\}.
\label{eq:pf-Gamma-system-bound-1}
\end{align}
For every $\alpha$ satisfying  $0\leq h^{-1}(r_1)\leq\alpha\leq1/2$, it follows from
Lemma~\ref{lem:J0-dominates}   that
\begin{align*}
J\big(h^{-1}(r_1),\alpha\big)
\leq
\widehat J(\alpha).
\end{align*}
Together with \eqref{eq:pf-Gamma-system-bound-1}, we further have
\begin{align}
r_0+r_1+r_2
&\leq
\max_{h^{-1}(r_1)\leq\alpha\leq1/2}
\min\big\{
L(\alpha),~
\widehat J(\alpha)+r_0
\big\}\nonumber\\
&\leq
\max_{0\leq\alpha\leq1/2}
\min\big\{
L(\alpha),~
\widehat J(\alpha)+r_0
\big\}\nonumber\\
&\leq
\max_{0\leq\alpha\leq1/2}
\min\big\{
L(\alpha),~
\widehat J(\alpha)+\beta
\big\} \label{eq:pf-Gamma-system-bound-3}\\
&=
L\big(\Gamma^{-1}(\beta)\big),\label{eq:pf-Gamma-system-bound-4}
\end{align}
where the inequality \eqref{eq:pf-Gamma-system-bound-3} follows from  $r_0\leq\beta$, and the  equality  \eqref{eq:pf-Gamma-system-bound-4} follows from Lemma~\ref{lem:Gamma-characterization}.
The proposition is thus proved.
\end{proof}

%

%

\section{From a Unique-Sum Pair to a Unique-Sum System}
\label{sec:code-to-system}

In this section, we present a coordinate projection approach for
obtaining a lower-dimensional unique-sum system from a unique-sum pair.
By applying this approach, we prove that there exists an admissible
triple for unique-sum systems that satisfies a condition involving the
optimal rate for unique-sum pairs.

%

For a binary vector
$\boldsymbol{x}\in\{0,1\}^{n}$,
$\mathrm{wt}(\boldsymbol{x})$ denotes the Hamming weight of
$\boldsymbol{x}$, i.e., the number of nonzero components of
$\boldsymbol{x}$.
Furthermore,
$\boldsymbol{x}\oplus\boldsymbol{y}$ denotes the componentwise
modulo-two sum of two binary vectors
$\boldsymbol{x}$ and $\boldsymbol{y}$ in $\{0,1\}^{n}$.

\begin{lemma}\label{lem:mean-xor-at-least-half}
For each $n$-dimensional unique-sum pair $(\mbX,\mbY)$, there exists an
$n$-dimensional unique-sum pair $(\mbX',\mbY)$
such that
\begin{align*}
|\mbX'|=|\mbX|\quad\textup{ and }\quad
\E\big[\wt(\bX'\oplus\bY)\big]\geq\frac n2,
\end{align*}
where $\bX'$ and~$\bY$ are  two independent random vectors uniformly
distributed over $\mbX'$ and~$\mbY$, respectively.
\end{lemma}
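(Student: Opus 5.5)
Our approach is to translate $\mbX$ by a fixed vector $\boldsymbol{a}\in\{0,1\}^n$ through the XOR operation, i.e., to set $\mbX'=\mbX\oplus\boldsymbol{a}=\{\bx\oplus\boldsymbol{a}:\bx\in\mbX\}$, and to choose $\boldsymbol{a}$ by an averaging argument. Clearly $|\mbX'|=|\mbX|$, since $\bx\mapsto\bx\oplus\boldsymbol{a}$ is a bijection. So two things need to be checked: that $(\mbX\oplus\boldsymbol{a},\mbY)$ is still a unique-sum pair, and that some $\boldsymbol{a}$ gives expected XOR-weight at least $n/2$.

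\textbf{Preserving unique-sum.} Flipping coordinate $i$ of every $\bx$ does not act coordinatewise on $\bx+\by$, since $x_i+y_i$ becomes $(1-x_i)+y_i$. However, consider the map $s\mapsto 1+y_i-x_i$ in coordinate $i$, with the differences $x_i-y_i$ in place of sums. The useful observation is that, for binary vectors, $\bx+\by$ determines $(\bx,\by)$ exactly when the multiset data per coordinate does. Concretely, $x_i+y_i\in\{0,2\}$ determines both bits, while $x_i+y_i=1$ leaves the ambiguity $(1,0)$ versus $(0,1)$. The unique-sum condition fails iff there exist $(\bx,\by)\neq(\bx',\by')$ with $\bx-\bx'=\by'-\by$.

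After flipping coordinate $i$ in $\mbX$, the difference $\bx-\bx'$ negates in coordinate $i$, while $\by'-\by$ is unchanged, so this equation is not obviously preserved. I will therefore instead flip coordinate $i$ in \emph{both} sets, $\mbX$ and $\mbY$. In that case both differences negate in coordinate $i$, the equation $\bx-\bx'=\by'-\by$ is preserved, and hence unique-sum is preserved. But flipping both sets leaves $\bx\oplus\by$ unchanged, which is useless for the weight condition.

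So I return to flipping only $\mbX$, and I expect this step to be the main obstacle. The fix I would try is to replace $\mbX$ by its complement image, i.e., to apply $\bx\mapsto\one\oplus\bx$ on a subset $S$ of coordinates, using the symmetry of the equation $\bx-\bx'=\by'-\by$. Flipping coordinate $i$ of $\mbX$ sends $x_i-x'_i$ to $-(x_i-x'_i)$. A collision in the new pair therefore means $\bx-\bx'=\by'-\by$ off $S$ and $\bx-\bx'=\by-\by'$ on $S$. This is a collision for the original pair $(\mbX,\mbY)$ only after swapping $\by$ and $\by'$ on $S$, and that swap need not land in $\mbY$. So a general $S$ fails. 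However, $S=[n]$ works: the condition becomes $\bx-\bx'=\by-\by'$, i.e.\ $\bx+\by'=\bx'+\by$, which is a collision of the original pair at $(\bx,\by')$ and $(\bx',\by)$, forcing $\bx=\bx'$ and $\by=\by'$. Hence $(\one\oplus\mbX,\mbY)$ is a unique-sum pair.

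\textbf{Weight condition.} It suffices to choose between $\mbX'=\mbX$ and $\mbX'=\one\oplus\mbX$. Since $\wt((\one\oplus\bx)\oplus\by)=n-\wt(\bx\oplus\by)$, the two expectations sum to exactly $n$. Hence one of them is at least $n/2$; taking that choice of $\mbX'$ completes the proof.
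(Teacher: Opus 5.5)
Your final argument is correct and is essentially the paper's proof: you replace $\mbX$ by its full complement $\{\one-\bx:\bx\in\mbX\}$ (which equals $\one\oplus\mbX$ for binary vectors), and you note that a collision $(\one-\bx)+\by=(\one-\bx')+\by'$ rearranges to the crossed collision $\bx'+\by=\bx+\by'$ in the original pair. You then use $\E[\wt((\one\oplus\bX)\oplus\bY)]=n-\E[\wt(\bX\oplus\bY)]$ to pick whichever of the two choices gives at least $n/2$; the exploratory detours (flipping only a proper subset $S$ of coordinates, or flipping both sets) can simply be deleted from a written proof.
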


\begin{proof}
First,
let $\bX$ and~$\bY$ be  two independent random vectors uniformly distributed over
$\mathcal{X}$ and $\mathcal{Y}$, respectively.
We consider two cases below.

\medskip
\noindent
\textbf{Case 1:} $\E\big[\wt(\bX\oplus\bY)\big]\geq n/2$.
\medskip

For this case,  we take
$
\mbX'=\mbX
$ and  $\boldsymbol{X}'= \boldsymbol{X}$.
Then, we have   $$\E\big[\wt(\bX'\oplus\bY)\big]=\E\big[\wt(\bX\oplus\bY)\big]\geq \frac{n}{2}.$$

\noindent
\textbf{Case 2:} $\E\big[\wt(\bX\oplus\bY)\big]< n/2$.
\medskip

For this case,
we take
$
\mbX'=
\{\one-\bx:\ \bx\in\mbX\}
$ and  $\boldsymbol{X}'= \boldsymbol{1}-\boldsymbol{X}$.\footnote{Throughout this paper, $\boldsymbol{1}$ denotes the all-ones
vector of appropriate dimension.}
 Clearly,
$
|\mbX'|=|\mbX|
$, and $\boldsymbol{X}'$ and~$\boldsymbol{Y}$ are two independent random vectors uniformly distributed over
$\mathcal{X}'$ and $\mathcal{Y}$, respectively.

We claim that $( \mbX',\mbY)$ is   a    unique-sum pair.
Otherwise, we assume that
$(\mbX',\mbY)$ is not a unique-sum pair, namely that there exist two distinct pairs
$(\one-\bx_1,\by_1)$ and $(\one-\bx_2,\by_2)$
in $\mbX'\times\mbY$ with
$\bx_1,\bx_2\in\mbX$ and $\by_1,\by_2\in\mbY$
such that
\begin{align*}
(\one-\bx_1)+\by_1
=
(\one-\bx_2)+\by_2
\quad\text{i.e.,}\quad
\bx_2+\by_1
=
\bx_1+\by_2 .
\end{align*}
We note that  $(\mbX,\mbY)$ is a unique-sum pair, and  $(\bx_2,\by_1),
(\bx_1,\by_2)\in\mbX\times\mbY
$. Then, the equality
$
\bx_2+\by_1
=
\bx_1+\by_2
$
implies that
$\bx_2=
\bx_1$ and $
\by_1=\by_2
$.
This contradicts the assumption that
$(\boldsymbol 1-\boldsymbol x_1,\boldsymbol y_1)$ and
$(\boldsymbol 1-\boldsymbol x_2,\boldsymbol y_2)$ are distinct.

Furthermore, we have
\begin{align*}
\E\big[\wt(\bX'\oplus\bY)\big]
&=
\E\big[\wt\big((\one-\bX)\oplus\bY\big)\big]\\
&=
\E\big[n-\wt(\bX\oplus\bY)\big]\\
&=
n-\E\big[\wt(\bX\oplus\bY)\big]\\
&>
\frac n2,
\end{align*}
where the inequality follows from  $
\E\big[\wt(\bX\oplus\bY)\big]< n/2.
$

Combining the above two cases, we have proved the lemma.
\end{proof}

In the remainder of this section, let
$n$ be an arbitrary positive integer, and let
$(\mathcal X,\mathcal Y)$ be an arbitrary $n$-dimensional unique-sum pair.
Further, let
$\boldsymbol{X}$ and $\boldsymbol{Y}$ be two independent random vectors
uniformly distributed over $\mathcal{X}$ and $\mathcal{Y}$, respectively.
By Lemma~\ref{lem:mean-xor-at-least-half}, we assume without loss of generality that
\begin{align*}
\E\big[\wt(\bX\oplus\bY)\big]\geq\frac n2.
\end{align*}

For every $1\leq i\leq n$,  we use $\boldsymbol{x}[i]$ to denote  the $i$-th component of
a binary vector $\boldsymbol{x}\in\{0,1\}^n$.
Furthermore, for every subset
$I\subseteq[n]\triangleq\{1,2,\cdots,n\}$, we let $\boldsymbol{x}[I]
\triangleq
(\boldsymbol{x}[i]:~i\in I)$,
i.e., the restriction of $\boldsymbol{x}$ to the coordinates in
$I$.
For every subset $I\subseteq[n]$ and  every $\boldsymbol{v}\in\{0,1\}^{|I|}$, we define the following two subsets of
$\{0,1\}^{n-|I|}$:
\begin{align}
\mathcal{X}_{I,\boldsymbol{v}}
&\triangleq
\big\{\boldsymbol{x}[I^{\mathrm{c}}]:~
\boldsymbol{x}\in\mathcal{X}~\textup{such that }~
\boldsymbol{x}[I]=\boldsymbol{v}\big\},
\label{eq:def-An-t}\\[1mm]
\mathcal{Y}_{I,\boldsymbol{1}-\boldsymbol{v}}
&\triangleq
\big\{\boldsymbol{y}[I^{\mathrm{c}}]:~
\boldsymbol{y}\in\mathcal{Y}~\textup{such that}~
\boldsymbol{y}[I]=\boldsymbol{1}-\boldsymbol{v}\big\},
\label{eq:def-Bn-t}
\end{align}
where
$I^{\mathrm{c}}$
denotes the complement of $I$ in $[n]$, i.e., $I^{\mathrm{c}}\triangleq[n]\setminus I$.
For   every subset $I\subseteq[n]$  and every pair $(i,j)$ with $0\leq i,j\leq n-|I|$, define
\begin{align}
\mathcal{V}_{I}^{(i,j)}
\triangleq\Big\{\boldsymbol{v}\in\{0,1\}^{|I|}:~
 2^i\leq |
\mathcal X_{ I,\boldsymbol{v}} |<2^{i+1},~2^j\leq |\mathcal Y_{ I,\boldsymbol 1-\boldsymbol{v}} |<2^{j+1}
\Big\}.
\label{eq:def-Tnij}
\end{align}

We next give a crucial lemma on
$\mathcal{V}_{I}^{(i,j)}$, whose proof is deferred to
Section~\ref{subsec-pf-lemma}.

\begin{lemma}\label{lemma:coordinate-dyadic-selection}
For every integer $0\leq k\leq\lfloor n/2\rfloor$, there exist a subset
$I\subseteq[n]$ with $|I|=k$ and two nonnegative integers
$i,j$ with
$0\leq i,j\leq n-|I|$
such that
\begin{align*}
\big|\mathcal{V}_{I}^{(i,j)}\big|\cdot 2^{i+j}
\geq
|\mbX||\mbY|\cdot
\frac{\binom{\lfloor n/2\rfloor}{k}}
{\binom nk}\cdot
\frac{1}{4(n-k+1)^2}.
\end{align*}
\end{lemma}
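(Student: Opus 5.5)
Choose $I$ uniformly at random among the $k$-subsets of $[n]$, average over this choice, and then pigeonhole over the dyadic classes $(i,j)$.

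\textbf{Step 1 (double counting).} Fix $I$ with $|I|=k$. Every pair $(\bx,\by)\in\mbX\times\mbY$ with $\bx[I]\oplus\by[I]=\one$, i.e.\ $\by[I]=\one-\bx[I]$, contributes exactly one element to $\mbX_{I,\boldsymbol v}\times\mbY_{I,\one-\boldsymbol v}$ for $\boldsymbol v=\bx[I]$. Distinct pairs give distinct elements, since $(\bx,\by)$ is recovered from $\boldsymbol v$ and the restrictions. Hence
\begin{align*}
N(I)\triangleq\sum_{\boldsymbol v\in\{0,1\}^k}|\mbX_{I,\boldsymbol v}||\mbY_{I,\one-\boldsymbol v}|
=\big|\{(\bx,\by)\in\mbX\times\mbY:\ I\subseteq \operatorname{supp}(\bx\oplus\by)\}\big|.
\end{align*}

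\textbf{Step 2 (averaging over $I$).} Summing over all $k$-subsets gives
\begin{align*}
\sum_{|I|=k}N(I)=\sum_{(\bx,\by)}\binom{\wt(\bx\oplus\by)}{k}=|\mbX||\mbY|\,\E\Big[\binom{\wt(\bX\oplus\bY)}{k}\Big].
\end{align*}
The map $w\mapsto\binom{w}{k}$, extended piecewise linearly between integers, is convex on $[0,\infty)$, since its increments $\binom{w}{k-1}$ are nondecreasing in $w$. By Jensen's inequality and the standing assumption $\E[\wt(\bX\oplus\bY)]\ge n/2$ (from Lemma~\ref{lem:mean-xor-at-least-half}), the expectation is at least the piecewise-linear extension evaluated at $n/2$, which in turn is at least $\binom{\lfloor n/2\rfloor}{k}$ by monotonicity. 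This is the one place where care is needed: the interpolation must be set up so that convexity and monotonicity both hold. Some $I$ therefore attains
\begin{align*}
N(I)\ge |\mbX||\mbY|\binom{\lfloor n/2\rfloor}{k}\Big/\binom nk .
\end{align*}

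\textbf{Step 3 (dyadic pigeonhole).} For this $I$, keep only the $\boldsymbol v$ with both $\mbX_{I,\boldsymbol v}$ and $\mbY_{I,\one-\boldsymbol v}$ nonempty; the others contribute $0$ to $N(I)$. Each retained $\boldsymbol v$ lies in exactly one $\mathcal V_I^{(i,j)}$ with $0\le i,j\le n-k$, because the sizes are between $1$ and $2^{n-k}$. Since $|\mbX_{I,\boldsymbol v}||\mbY_{I,\one-\boldsymbol v}|<4\cdot2^{i+j}$ for $\boldsymbol v\in\mathcal V_I^{(i,j)}$,
\begin{align*}
N(I)<4\sum_{i,j=0}^{n-k}|\mathcal V_I^{(i,j)}|\,2^{i+j}.
\end{align*}
There are $(n-k+1)^2$ pairs $(i,j)$, so some pair satisfies
\begin{align*}
|\mathcal V_I^{(i,j)}|\,2^{i+j}\ge \frac{N(I)}{4(n-k+1)^2},
\end{align*}
and combining with Step 2 yields the claimed bound.

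The unique-sum property is not used in this argument.
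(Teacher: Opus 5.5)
Your proof is correct and follows essentially the same route as the paper. The shared steps are: the double count $\sum_{|I|=k}|\Delta_I|=|\mbX||\mbY|\,\E\big[\binom{\wt(\bX\oplus\bY)}{k}\big]$ (your $N(I)$ is the paper's $|\Delta_I|$), a convexity bound giving $\E\big[\binom{W}{k}\big]\geq\binom{\lfloor n/2\rfloor}{k}$ from $\E[W]\geq n/2$, and the same dyadic pigeonhole over the $(n-k+1)^2$ classes. The only cosmetic difference is that you apply Jensen to the piecewise-linear interpolation of $w\mapsto\binom{w}{k}$, while the paper writes out the explicit supporting line at $\lfloor n/2\rfloor$ with slope $\binom{\lfloor n/2\rfloor}{k-1}$; this is the same argument made concrete.
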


Consider a subset $I\subseteq[n]$  and a pair $(i,j)$ with $0\leq i,j\leq n-|I|$.
By \eqref{eq:def-Tnij}, we can see that
  $ |
\mathcal X_{ I,\boldsymbol{v}} |\geq 2^i $ and $|\mathcal Y_{ I,\boldsymbol 1-\boldsymbol{v}} |\geq 2^j$
for every $\boldsymbol{v}\in \mathcal{V}_{I}^{(i,j)}.$
As such, for every $\boldsymbol{v}\in \mathcal{V}_{I}^{(i,j)},$ we
 use
$\mathcal X_{I,\boldsymbol{v}}^{(i)}$ and
$\mathcal Y_{I,\boldsymbol 1-\boldsymbol{v}}^{(j)}$
to denote arbitrary subsets of
$\mathcal X_{I,\boldsymbol{v}}$ and
$\mathcal Y_{I,\boldsymbol 1-\boldsymbol{v}}$, respectively, satisfying
\begin{align}
\big|\mathcal X_{I,\boldsymbol{v}}^{(i)}\big|
=
2^i
\quad\text{ and }\quad
\big|\mathcal Y_{I,\boldsymbol 1-\boldsymbol{v}}^{(j)}\big|
=
2^j.
\label{eq:def-dyadic-subsets}
\end{align}

With the above notation, the following lemma presents the construction of a
unique-sum system from the unique-sum pair $(\mathcal X,\mathcal Y)$.

\begin{lemma}\label{lem:section-injectivity}
For  any   $I\subseteq[n]$ and $0\leq i,j\leq n-|I|$ with $\mathcal{V}_{I}^{(i,j)}\neq\emptyset$,
the collection
$$\Big\{
\big(
\mbX_{I,\boldsymbol{v}}^{(i)},
\mbY_{I,\boldsymbol{1}-\boldsymbol{v}}^{(j)}
\big):~
\boldsymbol{v}\in\mathcal{V}_{I}^{(i,j)}
\Big\}$$ forms  an $(n-|I|)$-dimensional unique-sum system.
\end{lemma}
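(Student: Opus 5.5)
We must verify the two conditions of Definition~\ref{def-u-s-sys} for the collection indexed by $\boldsymbol v\in\mathcal V_I^{(i,j)}$, with $t=|\mathcal V_I^{(i,j)}|\geq1$. The equal-size condition is immediate from \eqref{eq:def-dyadic-subsets}: every $\mbX^{(i)}_{I,\boldsymbol v}$ has size $2^i$ and every $\mbY^{(j)}_{I,\one-\boldsymbol v}$ has size $2^j$. These sets are nonempty subsets of $\{0,1\}^{n-|I|}$ since $2^i,2^j\geq1$. So the real content is the injectivity of $(\boldsymbol v,\bx',\by')\mapsto\bx'+\by'$.

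The plan is to lift the projected vectors back to $\{0,1\}^n$ and use that $(\mbX,\mbY)$ is a unique-sum pair. The key observation is that for $\boldsymbol v\in\mathcal V_I^{(i,j)}$, any lifted $\bx\in\mbX$ with $\bx[I]=\boldsymbol v$ and any $\by\in\mbY$ with $\by[I]=\one-\boldsymbol v$ satisfy $(\bx+\by)[I]=\one$, regardless of $\boldsymbol v$. Hence the sum on $I$ carries no information about $\boldsymbol v$, and the sum on $I^{\mathrm c}$ is exactly $\bx'+\by'$.

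\textbf{Injectivity argument.} Suppose $\bx_1'+\by_1'=\bx_2'+\by_2'$ with $\bx_\ell'\in\mbX^{(i)}_{I,\boldsymbol v_\ell}$ and $\by_\ell'\in\mbY^{(j)}_{I,\one-\boldsymbol v_\ell}$ for $\ell=1,2$.
\begin{enumerate}
\item By \eqref{eq:def-An-t} and \eqref{eq:def-Bn-t}, choose $\bx_\ell\in\mbX$ with $\bx_\ell[I]=\boldsymbol v_\ell$ and $\bx_\ell[I^{\mathrm c}]=\bx_\ell'$. Similarly choose $\by_\ell\in\mbY$ with $\by_\ell[I]=\one-\boldsymbol v_\ell$ and $\by_\ell[I^{\mathrm c}]=\by_\ell'$.
\item Then $(\bx_1+\by_1)[I]=\one=(\bx_2+\by_2)[I]$, and on $I^{\mathrm c}$ the two sums agree by hypothesis. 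So $\bx_1+\by_1=\bx_2+\by_2$.
\item Since $(\mbX,\mbY)$ is a unique-sum pair, $\bx_1=\bx_2$ and $\by_1=\by_2$.
\item Restricting to $I$ gives $\boldsymbol v_1=\boldsymbol v_2$. Restricting to $I^{\mathrm c}$ gives $\bx_1'=\bx_2'$ and $\by_1'=\by_2'$, which is the required injectivity.
\end{enumerate}

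\textbf{Main obstacle.} The only delicate point is recognizing that the index $\boldsymbol v$ is recoverable even though the sum on $I$ is constant. It is recovered through the injectivity of the original pair, not from the projected sum itself. The complementary choice $\one-\boldsymbol v$ in the definition of $\mathcal Y_{I,\one-\boldsymbol v}$ is exactly what makes the $I$-part of every sum equal to $\one$. Beyond that, the argument is bookkeeping.
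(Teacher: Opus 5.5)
Your proposal is correct and follows essentially the same route as the paper. Both arguments lift the projected vectors back to $\mathcal X\times\mathcal Y$, use $(\boldsymbol x+\boldsymbol y)[I]=\boldsymbol 1$ for every $\boldsymbol v$, and then invoke the unique-sum property of $(\mathcal X,\mathcal Y)$. The only difference is cosmetic: the paper proves injectivity on the larger set indexed by all $\boldsymbol v\in\{0,1\}^{|I|}$ with the full slices $\mathcal X_{I,\boldsymbol v}$, $\mathcal Y_{I,\boldsymbol 1-\boldsymbol v}$ and then restricts, whereas you work directly with the dyadic subsets.
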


\begin{proof}
For every $\boldsymbol{v}\in \mathcal{V}_{I}^{(i,j)}$,
it follows from the definitions of $\mathcal X_{I,\boldsymbol{v}}, \mathcal Y_{I,\boldsymbol{1}-\boldsymbol{v}},\mathcal X_{I,\boldsymbol{v}}^{(i)},\mathcal Y_{I,\boldsymbol{1}-\boldsymbol{v}}^{(j)}$ (cf.~\eqref{eq:def-An-t}, \eqref{eq:def-Bn-t} and \eqref{eq:def-dyadic-subsets})
  that
\begin{align*}
\mathcal X_{I,\boldsymbol{v}}^{(i)}\subseteq \mathcal X_{I,\boldsymbol{v}}\subseteq \{0,1\}^{n-|I|},\qquad
\mathcal Y_{I,\boldsymbol{1}-\boldsymbol{v}}^{(j)}\subseteq \mathcal Y_{I,\boldsymbol{1}-\boldsymbol{v}}\subseteq \{0,1\}^{n-|I|},
\end{align*}
and
\begin{align*}
\big|\mbX_{I,\boldsymbol{v}}^{(i)}\big|=2^i,\qquad
\big|\mbY_{I,\boldsymbol1-\boldsymbol{v}}^{(j)}\big|=2^j.
\end{align*}
Therefore, it remains to prove that the mapping
$
(\boldsymbol{v},\boldsymbol{a},\boldsymbol{b})
\longmapsto
\boldsymbol{a}+\boldsymbol{b}$
is injective on
the set
\begin{align*}
\Big\{(\boldsymbol{v},\boldsymbol{a},\boldsymbol{b}):~
\boldsymbol{v}\in\mathcal{V}_{I}^{(i,j)},\,
\boldsymbol{a}\in \mathcal X_{I,\boldsymbol{v}}^{(i)},\,
\boldsymbol{b}\in\mathcal Y_{I,\boldsymbol{1}-\boldsymbol{v}}^{(j)}\Big\}.
\end{align*}

Since $\mathcal{V}_{I}^{(i,j)} \subseteq\{0,1\}^{|I|}$,
$
\mathcal X_{I,\boldsymbol{v}}^{(i)}
\subseteq
\mathcal X_{I,\boldsymbol{v}},$ and $
\mathcal Y_{I,\boldsymbol{1}-\boldsymbol{v}}^{(j)}
\subseteq
\mathcal Y_{I,\boldsymbol{1}-\boldsymbol{v}},
$
it suffices to prove that the mapping
$
(\boldsymbol{v},\boldsymbol{a},\boldsymbol{b})
\longmapsto
\boldsymbol{a}+\boldsymbol{b}$
is injective on
the set
\begin{align*}
\Big\{(\boldsymbol{v},\boldsymbol{a},\boldsymbol{b}):~
\boldsymbol{v}\in\{0,1\}^{|I|},\,
\boldsymbol{a}\in \mathcal{X}_{I, \boldsymbol{v}} ,\,
\boldsymbol{b}\in \mathcal{Y}_{I,\boldsymbol{1}-\boldsymbol{v}}\Big\}.
\end{align*}
Otherwise, we assume that there exist two distinct tuples
$(\boldsymbol{v},\boldsymbol{a},\boldsymbol{b})$ and
$(\boldsymbol{v}',\boldsymbol{a}',\boldsymbol{b}')$  with $\boldsymbol{v}\in\{0,1\}^{|I|},
\boldsymbol{a}\in \mathcal{X}_{I, \boldsymbol{v}} ,
\boldsymbol{b} \in \mathcal{Y}_{I,\boldsymbol{1}-\boldsymbol{v}}$
and $\boldsymbol{v}'\in\{0,1\}^{|I|},
\boldsymbol{a}'\in \mathcal{X}_{I, \boldsymbol{v}'},
 \boldsymbol{b}'\in \mathcal{Y}_{I,\boldsymbol{1}-\boldsymbol{v}'}$
 such that $\boldsymbol{a}+\boldsymbol{b}=
\boldsymbol{a}'+\boldsymbol{b}'.$
Since $\boldsymbol{a}\in \mathcal{X}_{I, \boldsymbol{v}}$ and $
\boldsymbol{b} \in \mathcal{Y}_{I,\boldsymbol{1}-\boldsymbol{v}}$,
there exist
$\boldsymbol{x}\in\mathcal{X}$ and
$\boldsymbol{y}\in\mathcal{Y}$ such that
\begin{align}
\boldsymbol{x}[I]&=
\boldsymbol{v},\qquad
\boldsymbol{x} [I^{\mathrm{c}}]=
\boldsymbol{a},\qquad
\boldsymbol{y}\big[I\big]=
\boldsymbol{1}-\boldsymbol{v},\qquad
\boldsymbol{y}[I^{\mathrm{c}}]=
\boldsymbol{b}.
\label{pf-section-injectivity-eq2}
\end{align}
Similarly, since $\boldsymbol{a}'\in \mathcal{X}_{I, \boldsymbol{v}'}$ and $
 \boldsymbol{b}'\in \mathcal{Y}_{I,\boldsymbol{1}-\boldsymbol{v}'}$, there exist
$\boldsymbol{x}'\in\mathcal{X}$ and
$\boldsymbol{y}'\in\mathcal{Y}$ satisfying
\begin{align}
\boldsymbol{x}'[I]=
\boldsymbol{v}',\qquad
\boldsymbol{x}' [I^{\mathrm{c}}]=
\boldsymbol{a}',\qquad
\boldsymbol{y}'\big[I\big]=
\boldsymbol{1}-\boldsymbol{v}',\qquad
\boldsymbol{y}' [I^{\mathrm{c}}]=
\boldsymbol{b}'.
\label{pf-section-injectivity-eq3}
\end{align}
By \eqref{pf-section-injectivity-eq2} and \eqref{pf-section-injectivity-eq3}, we have
\begin{align}
(\boldsymbol{x}+\boldsymbol{y})[I]&=\boldsymbol{x}[I]+\boldsymbol{y}[I]\nonumber\\
&=
\boldsymbol{v}
+
\boldsymbol{1}-\boldsymbol{v}\nonumber\\
&=\boldsymbol{1}\nonumber\\
&=\boldsymbol{v}'
+
(\boldsymbol{1}-\boldsymbol{v}')\nonumber\\
&=\boldsymbol{x}'[I]+\boldsymbol{y}'[I]=(\boldsymbol{x}'+\boldsymbol{y}')[I].
\label{pf-section-injectivity-eq4}
\end{align}
By  \eqref{pf-section-injectivity-eq2}, \eqref{pf-section-injectivity-eq3}, and  $\boldsymbol{a}+\boldsymbol{b}=
\boldsymbol{a}'+\boldsymbol{b}'$, we also have
\begin{align}
(\boldsymbol{x}+\boldsymbol{y}) [I^{\mathrm{c}}]&=\boldsymbol{x} [I^{\mathrm{c}}]+\boldsymbol{y} [I^{\mathrm{c}}]
\nonumber\\
&=
\boldsymbol{a}+\boldsymbol{b}
\nonumber\\
&=
\boldsymbol{a}'+\boldsymbol{b}'
\nonumber\\
&=\boldsymbol{x}' [I^{\mathrm{c}}]+\boldsymbol{y}' [I^{\mathrm{c}}]=
(\boldsymbol{x}'+\boldsymbol{y}') [I^{\mathrm{c}}].
\label{pf-section-injectivity-eq6}
\end{align}
It then follows from \eqref{pf-section-injectivity-eq4} and \eqref{pf-section-injectivity-eq6} that
$\boldsymbol{x}+\boldsymbol{y}=
\boldsymbol{x}'+\boldsymbol{y}'.$
Since
$(\mathcal{X},\mathcal{Y})$ is an $n$-dimensional unique-sum pair,  $\boldsymbol{x}+\boldsymbol{y}=
\boldsymbol{x}'+\boldsymbol{y}'$ implies that
$
\boldsymbol{x}=
\boldsymbol{x}'$ and $
\boldsymbol{y}=
\boldsymbol{y}'.
$
Together with \eqref{pf-section-injectivity-eq2} and~\eqref{pf-section-injectivity-eq3},  we further have $\boldsymbol{v}=
\boldsymbol{v}',
\boldsymbol{a}=
\boldsymbol{a}',
\boldsymbol{b}=
\boldsymbol{b}'.$
This contradicts the assumption that
$(\boldsymbol v,\boldsymbol a,\boldsymbol b)$ and
$(\boldsymbol v',\boldsymbol a',\boldsymbol b')$
are distinct.
The lemma is thus proved.
\end{proof}

\begin{proposition}\label{prop:extracted-triple}
For every $0<\alpha\leq1/4$, there exists an admissible triple
$(r_0,r_1,r_2)$ for unique-sum systems satisfying
\begin{align*}
r_0
\leq
\frac{\alpha}{1-\alpha}\quad\textup{ and }\quad
r_0+r_1+r_2
\geq
\frac{1}{1-\alpha}\bigg(R+\frac12h(2\alpha)-h(\alpha)\bigg).
\end{align*}
\end{proposition}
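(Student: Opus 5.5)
Fix $0<\alpha\le 1/4$. The plan is to apply Lemma~\ref{lemma:coordinate-dyadic-selection} to a sequence of near-optimal unique-sum pairs with $k=\lfloor \alpha n\rfloor$, and then read off the rates of the system given by Lemma~\ref{lem:section-injectivity}.

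\textbf{Step 1: choose near-optimal pairs and extract systems.} For each $n$, take an $n$-dimensional unique-sum pair $(\mbX,\mbY)$ with $|\mbX||\mbY|=M(n)$. By Lemma~\ref{lem:mean-xor-at-least-half} we may assume $\E[\wt(\bX\oplus\bY)]\ge n/2$; this is the standing assumption under which Lemma~\ref{lemma:coordinate-dyadic-selection} is stated. Set $k=\lfloor\alpha n\rfloor$. Since $\alpha\le1/4$, we have $k\le\lfloor n/2\rfloor$, so Lemma~\ref{lemma:coordinate-dyadic-selection} gives $I$ with $|I|=k$ and integers $i,j$ such that
\begin{align*}
\big|\mathcal V_I^{(i,j)}\big|\,2^{i+j}\ \ge\ M(n)\,\frac{\binom{\lfloor n/2\rfloor}{k}}{\binom nk}\,\frac{1}{4(n-k+1)^2}.
\end{align*}
The right-hand side is positive, so $\mathcal V_I^{(i,j)}\neq\emptyset$. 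Lemma~\ref{lem:section-injectivity} then yields an $m$-dimensional unique-sum system $\mathcal S_n$ with $m=n-k$, $t=|\mathcal V_I^{(i,j)}|$ systems, and component sizes $2^i$ and $2^j$.

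\textbf{Step 2: compute the rates of $\mathcal S_n$.} Since $\mathcal V_I^{(i,j)}\subseteq\{0,1\}^k$, we get $r_0(\mathcal S_n)=\frac1m\log t\le \frac{k}{n-k}\le\frac{\alpha}{1-\alpha}$. The last inequality holds because $k/(n-k)$ is increasing in $k$ and $k\le\alpha n$.

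For the sum of rates, use Stirling: $\log\binom{n}{k}=nh(\alpha)+O(\log n)$ and $\log\binom{\lfloor n/2\rfloor}{k}=\frac n2h(2\alpha)+O(\log n)$. Here $k/(n/2)\to2\alpha\le1/2$, and $h$ is continuous, which absorbs the floors. Dividing the displayed bound by $m=(1-\alpha)n+O(1)$ gives
\begin{align*}
r_0(\mathcal S_n)+r_1(\mathcal S_n)+r_2(\mathcal S_n)\ \ge\ \frac{1}{1-\alpha}\Big(\tfrac1n\log M(n)+\tfrac12h(2\alpha)-h(\alpha)\Big)-o(1).
\end{align*}
By the definition of $R$, $\frac1n\log M(n)\to R$.

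\textbf{Step 3: pass to an admissible triple (expected main obstacle).} Admissibility requires a single triple, approached coordinatewise along arbitrarily large dimensions, but Step 2 only controls $r_0$ and the sum $r_0+r_1+r_2$. The rates are bounded: $r_0\le\alpha/(1-\alpha)$ and $r_1,r_2\le 1$. So by compactness, choose a subsequence $n_s$ along which $(r_0(\mathcal S_{n_s}),r_1(\mathcal S_{n_s}),r_2(\mathcal S_{n_s}))$ converges to some $(r_0,r_1,r_2)$. Each dimension $m_s=n_s-k_s\to\infty$. For any $\epsilon>0$ and $N$, eventually every coordinate of the limit is less than the corresponding coordinate of $\mathcal S_{n_s}$ plus $\epsilon$, with $m_s\ge N$, so $(r_0,r_1,r_2)$ is admissible. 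Finally, both inequalities pass to the limit: $r_0\le\alpha/(1-\alpha)$ because each term satisfies it, and $r_0+r_1+r_2\ge\frac{1}{1-\alpha}\big(R+\frac12h(2\alpha)-h(\alpha)\big)$ because the $o(1)$ term vanishes.

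The remaining care is the boundary case in the Stirling estimates. Since $k\le n/4$, the binomial bounds $\frac{1}{n+1}2^{nh(k/n)}\le\binom nk\le 2^{nh(k/n)}$ apply uniformly, so no degenerate cases arise.
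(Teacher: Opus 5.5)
Your proposal is correct and follows essentially the same route as the paper. It normalizes an optimal pair via Lemma~\ref{lem:mean-xor-at-least-half}, applies Lemma~\ref{lemma:coordinate-dyadic-selection} with $k=\lfloor\alpha n\rfloor$, takes the $(n-k)$-dimensional system from Lemma~\ref{lem:section-injectivity}, bounds $r_0$ by $k/(n-k)\le\alpha/(1-\alpha)$, controls the sum rate through binomial asymptotics, and passes to a convergent subsequence to get an admissible triple. One small point: continuity of $h$ alone only gives an $o(n)$ error when absorbing the floors, not the $O(\log n)$ you state, but $o(n)$ is all the argument needs.
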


\begin{proof}[Proof of Proposition~\ref{prop:extracted-triple}]
Fix an arbitrary real number $0<\alpha\leq1/4$.
Recall that $n$ is an arbitrary positive integer and
$(\mathcal X,\mathcal Y)$ is an $n$-dimensional unique-sum pair.
We further take $(\mathcal X,\mathcal Y)$ such that
\begin{align}
|\mathcal X||\mathcal Y|=M(n),
\label{eq-M}
\end{align}
where we recall \eqref{eq:def-Mn} for the definition of $M(n)$.
Since $0<\alpha\leq1/4$, we have
$\lfloor\alpha n\rfloor\leq\lfloor n/2\rfloor$.
For the above $n$-dimensional unique-sum pair
$(\mathcal X,\mathcal Y)$, by
Lemma~\ref{lemma:coordinate-dyadic-selection}, we choose a subset
$I\subseteq[n]$ with $|I|=\lfloor\alpha n\rfloor$ and a pair $(i,j)$
with $0\leq i,j\leq n-|I|$ such that
\begin{align}
\big|\mathcal{V}_{I}^{(i,j)}\big|\cdot 2^{i+j}
\geq
|\mbX||\mbY|\cdot
\frac{~\binom{\lfloor n/2\rfloor}{\lfloor\alpha n\rfloor}~}
{\binom{n}{\lfloor\alpha n\rfloor}}
\cdot
\frac{1}{4(n-\lfloor\alpha n\rfloor+1)^2}.
\label{eq:pf-extracted-triple-main}
\end{align}
Since the right-hand side of
\eqref{eq:pf-extracted-triple-main} is strictly positive, we have
$
\big|\mathcal V_I^{(i,j)}\big|>0,$
and thus $\mathcal V_I^{(i,j)}\neq\emptyset$.
It further follows from Lemma~\ref{lem:section-injectivity} that
the collection
$$\Big\{
\big(
\mbX_{I,\boldsymbol{v}}^{(i)},
\mbY_{I,\boldsymbol{1}-\boldsymbol{v}}^{(j)}
\big):~
\boldsymbol{v}\in\mathcal{V}_{I}^{(i,j)}
\Big\}$$ forms  an $(n-|I|)$-dimensional unique-sum system. Note that $|I|=\lfloor\alpha n\rfloor$,  and  $
\big|\mathcal X_{I,\boldsymbol{v}}^{(i)}\big|
=
2^i$ and $
\big|\mathcal Y_{I,\boldsymbol 1-\boldsymbol{v}}^{(j)}\big|
=
2^j$ for all $\boldsymbol{v}\in\mathcal{V}_{I}^{(i,j)}$ (cf.~\eqref{eq:def-dyadic-subsets}).
We denote the rate tuple of the above system by
$(r_{0,n},r_{1,n},r_{2,n})$, where
\begin{align}
r_{0,n}
&\triangleq  \frac{1}{n-\lfloor\alpha n\rfloor}\log\big|\mathcal{V}_{I}^{(i,j)}\big|,
\label{eq:def-r0n}\\
r_{1,n}
&\triangleq\frac{1}{n-\lfloor\alpha n\rfloor}\log  2^{i}= \frac{i}{n-\lfloor\alpha n\rfloor},
\label{eq:def-r1n}\\
r_{2,n}
&\triangleq\frac{1}{n-\lfloor\alpha n\rfloor}\log   2^{j} =\frac{j}{n-\lfloor\alpha n\rfloor}.
\label{eq:def-r2n}
\end{align}

Before discussing further, we need the following two inequalities:
\begin{align}
\limsup_{n\to\infty}r_{0,n}
&\leq
\frac{\alpha}{1-\alpha},
\label{eq:pf-r0-epsilon}\\
\liminf_{n\to\infty}
\big(r_{0,n}+r_{1,n}+r_{2,n}\big)
&\geq
\frac{1}{1-\alpha}
\bigg(
R+\frac12h(2\alpha)-h(\alpha)
\bigg),
\label{eq:pf-total-epsilon}
\end{align}
which will be verified at the end of the proof.
We note that the sequence
$\{(r_{0,n},r_{1,n},r_{2,n})\}_{n\geq1}$  is bounded.
To be specific,  it follows from
$\mathcal V_I^{(i,j)}\subseteq\{0,1\}^{|I|}$
and $|I|=\lfloor\alpha n\rfloor$ that
\begin{align}
0
\leq
r_{0,n}
\leq \frac{1}{n-\lfloor\alpha n\rfloor}\log 2^{|I|}
=
\frac{\lfloor\alpha n\rfloor}
{n-\lfloor\alpha n\rfloor}
\leq
\frac{\alpha}{1-\alpha}
\leq
\frac13,\label{bounded-eq1}
\end{align}
where the last two inequalities follow from
$\lfloor\alpha n\rfloor\leq\alpha n$
and $0<\alpha\leq1/4$, respectively.
Furthermore, by
$0\leq i,j\leq n-|I|$ and   $|I|=\lfloor\alpha n\rfloor$, we have
\begin{align*}
0
\leq
r_{1,n},r_{2,n}
\leq
1. 
\end{align*}
Hence, there exists a subsequence of
$\{(r_{0,n},r_{1,n},r_{2,n})\}_{n\geq1}$
that converges to a triple $(r_0,r_1,r_2)$. We also recall that   $(r_{0,n},r_{1,n},r_{2,n})$
is the rate tuple of an $(n-\lfloor\alpha n\rfloor)$-dimensional
unique-sum system, and 
$n-\lfloor\alpha n\rfloor\to\infty$ as $n\to\infty$.
Hence, by the definition of admissible triples, $(r_0,r_1,r_2)$ is
admissible for unique-sum systems.
It further follows from
\eqref{eq:pf-r0-epsilon} and
\eqref{eq:pf-total-epsilon} that
\begin{align*}
r_0
\leq
\frac{\alpha}{1-\alpha}
\quad\textup{and}\quad
r_0+r_1+r_2
\geq
\frac{1}{1-\alpha}
\bigg(
R+\frac12h(2\alpha)-h(\alpha)
\bigg).
\end{align*}

To complete the proof, it remains to prove the two inequalities \eqref{eq:pf-r0-epsilon} and \eqref{eq:pf-total-epsilon}.
By \eqref{bounded-eq1}, we immediately obtain
\begin{align*}
\limsup_{n\to\infty}r_{0,n}
\leq \limsup_{n\to\infty}\frac{\lfloor\alpha n\rfloor}{n-\lfloor\alpha n\rfloor}=
\frac{\alpha}{1-\alpha}.
\end{align*}
Furthermore, by the definitions of
$r_{0,n},r_{1,n},r_{2,n}$ (cf.~\eqref{eq:def-r0n}, \eqref{eq:def-r1n} and \eqref{eq:def-r2n}),
we consider
\begin{align}
 r_{0,n}+r_{1,n}+r_{2,n}
 &=\frac{1}{n-\lfloor\alpha n\rfloor}\log\big|\mathcal{V}_{I}^{(i,j)}\big|+\frac{1}{n-\lfloor\alpha n\rfloor}\log  2^{i} +\frac{1}{n-\lfloor\alpha n\rfloor}\log   2^{j}
 \nonumber\\
 &=
\frac{1}{n-\lfloor\alpha n\rfloor}
\log
\big(
\big|\mathcal{V}_{I}^{(i,j)}\big|\cdot
2^{i+j}
\big)\nonumber\\
&\geq  \frac{1}{n-\lfloor\alpha n\rfloor}
\log\Bigg[|\mbX||\mbY|\cdot
\frac{~\binom{\lfloor n/2\rfloor}{\lfloor\alpha n\rfloor}~}
{\binom{n}{\lfloor\alpha n\rfloor} }\cdot
\frac{1}{4(n-\lfloor\alpha n\rfloor+1)^2}\Bigg]
\label{eq:total-rate-finite-lower-new-2}
\\
&=
\frac{1}{n-\lfloor\alpha n\rfloor}
\log(|\mbX||\mbY|)
+
\frac{1}{n-\lfloor\alpha n\rfloor}
\log \binom{\lfloor n/2\rfloor}{\lfloor\alpha n\rfloor}
\nonumber\\
&~~~\;+
\frac{-1}{n-\lfloor\alpha n\rfloor}
\log   \binom{n}{\lfloor\alpha n\rfloor}
+
\frac{-1}{n-\lfloor\alpha n\rfloor}
\log\big(4(n-\lfloor\alpha n\rfloor+1)^2\big),
\label{eq:total-rate-finite-lower-new}
\end{align}
where
the inequality \eqref{eq:total-rate-finite-lower-new-2} follows from \eqref{eq:pf-extracted-triple-main}.
It follows from~\eqref{eq:total-rate-finite-lower-new} that
\begin{align}
\liminf_{n\to\infty}
\big(r_{0,n}+r_{1,n}+r_{2,n}\big)\geq
\liminf_{n\to\infty} \frac{1}{n-\lfloor\alpha n\rfloor}
\log(|\mbX||\mbY|)+\liminf_{n\to\infty}
\frac{1}{n-\lfloor\alpha n\rfloor}
\log \binom{\lfloor n/2\rfloor}{\lfloor\alpha n\rfloor}&
\nonumber\\
+\liminf_{n\to\infty}
\frac{-1}{n-\lfloor\alpha n\rfloor}
\log   \binom{n}{\lfloor\alpha n\rfloor}+\liminf_{n\to\infty}
\frac{-1}{n-\lfloor\alpha n\rfloor}
\log\big(4(n-\lfloor\alpha n\rfloor+1)^2\big).&\label{geq-eq}
\end{align}

Since $|\mbX||\mbY|=M(n)$ (cf.~\eqref{eq-M}) and
$
\lim_{n\to\infty}(1/n)\log M(n)=R
$ (cf.~\eqref{eq:def-Csum}),
we have
\begin{align}
\liminf_{n\to\infty} \frac{1}{n-\lfloor\alpha n\rfloor}
\log(|\mbX||\mbY|)=\lim_{n\to\infty}
\frac{1}{n-\lfloor\alpha n\rfloor}
\log(|\mbX||\mbY|)
=
\frac{R}{1-\alpha}.
\label{eq:pf-limit-Mn}
\end{align}
Furthermore, by the standard asymptotic estimates for binomial
coefficients, we have
\begin{align*}
\log
\binom{\lfloor n/2\rfloor}{\lfloor\alpha n\rfloor}
&=
\frac n2 h(2\alpha)+o(n); \\
\log
\binom{n}{\lfloor\alpha n\rfloor}
&=
nh(\alpha)+o(n).
\end{align*}
It then follows that
\begin{align}
\liminf_{n\to\infty}
\frac{1}{n-\lfloor\alpha n\rfloor}
\log \binom{\lfloor n/2\rfloor}{\lfloor\alpha n\rfloor}&=\lim_{n\to\infty}
\frac{1}{n-\lfloor\alpha n\rfloor}
\log
\binom{\lfloor n/2\rfloor}{\lfloor\alpha n\rfloor}
=
\frac{1}{2(1-\alpha)}h(2\alpha);
\label{eq:pf-limit-binomial-one}\\
 \liminf_{n\to\infty}
\frac{-1}{n-\lfloor\alpha n\rfloor}
\log   \binom{n}{\lfloor\alpha n\rfloor}&=\lim_{n\to\infty}
\frac{-1}{n-\lfloor\alpha n\rfloor}
\log
\binom{n}{\lfloor\alpha n\rfloor}
 =
\frac{-1}{1-\alpha}h(\alpha).
\label{eq:pf-limit-binomial-two}
\end{align}
Finally, we note that
\begin{align}
\liminf_{n\to\infty}
\frac{-1}{n-\lfloor\alpha n\rfloor}
\log\big(4(n-\lfloor\alpha n\rfloor+1)^2\big)=\lim_{n\to\infty}
\frac{-1}{n-\lfloor\alpha n\rfloor}
\log
\big(4(n-\lfloor\alpha n\rfloor+1)^2\big)
=
0.
\label{eq:pf-limit-polynomial}
\end{align}
Combining
\eqref{eq:pf-limit-Mn}, \eqref{eq:pf-limit-binomial-one}, \eqref{eq:pf-limit-binomial-two}, and \eqref{eq:pf-limit-polynomial} with \eqref{geq-eq}, we obtain
\begin{align*}
\liminf_{n\to\infty}
\big(r_{0,n}+r_{1,n}+r_{2,n}\big)
\geq
\frac{1}{1-\alpha}
\left(
R+\frac12h(2\alpha)-h(\alpha)
\right).
\end{align*}
This completes the proof of the proposition.
\end{proof}

\subsection{Proof of Lemma~\ref{lemma:coordinate-dyadic-selection}}\label{subsec-pf-lemma}

In this subsection, we prove
Lemma~\ref{lemma:coordinate-dyadic-selection}.
We first give some notation and auxiliary lemmas.
Recall from the discussion following
Lemma~\ref{lem:mean-xor-at-least-half} that  $n$ is an arbitrary positive integer, $(\mathcal X,\mathcal Y)$ is an $n$-dimensional unique-sum pair, and
$\boldsymbol{X}$ and $\boldsymbol{Y}$ are two independent random vectors
uniformly distributed over $\mathcal{X}$ and $\mathcal{Y}$, respectively, such that
\begin{align}
\E\big[\wt(\bX\oplus\bY)\big]\geq\frac n2.
\label{eq:section-balanced-assumption1}
\end{align}
For every subset $I\subseteq[n]$, define
\begin{align}
\Delta_{I}=
\big\{
(\boldsymbol{x},\boldsymbol{y})\in\mathcal{X}\times\mathcal{Y}:~\boldsymbol{y}[I]=\boldsymbol{1}-\boldsymbol{x}[I]
\big\} .
\label{eq:def-NnK-equiv-form}
\end{align}

\begin{lemma}\label{lem:coordinate-extraction}
For any integer $0\leq k\leq\lfloor n/2\rfloor$, there exists a subset
$I\subseteq[n]$ with $|I|=k$ such that
\begin{align*}
|\Delta_{I}|
\geq
|\mbX||\mbY|\cdot
\frac{~\binom{\lfloor n/2\rfloor}{k}~}
{\binom nk}.
\end{align*}
\end{lemma}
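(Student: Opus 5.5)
We will use an averaging argument over a uniformly random $k$-subset $I\subseteq[n]$. First, we write
\begin{align*}
\sum_{I\subseteq[n],\,|I|=k}|\Delta_I|
=\sum_{(\bx,\by)\in\mbX\times\mbY}\#\big\{I:\ |I|=k,\ I\subseteq D(\bx,\by)\big\}
=\sum_{(\bx,\by)\in\mbX\times\mbY}\binom{\wt(\bx\oplus\by)}{k},
\end{align*}
where $D(\bx,\by)\triangleq\{i\in[n]:\bx[i]\neq\by[i]\}$. This holds because $\by[I]=\one-\bx[I]$ means exactly that $\bx$ and $\by$ differ on every coordinate of $I$, i.e.\ $I\subseteq D(\bx,\by)$, and $|D(\bx,\by)|=\wt(\bx\oplus\by)$. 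Dividing by $\binom nk$ and by $|\mbX||\mbY|$ gives
\begin{align*}
\frac{1}{\binom nk}\sum_{|I|=k}|\Delta_I|=|\mbX||\mbY|\cdot\frac{\E\big[\binom{W}{k}\big]}{\binom nk},
\qquad W\triangleq\wt(\bX\oplus\bY).
\end{align*}
Since the maximum over $I$ is at least the average, it suffices to show $\E\big[\binom{W}{k}\big]\geq\binom{\lfloor n/2\rfloor}{k}$.

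For this we use \eqref{eq:section-balanced-assumption1}, i.e.\ $\E[W]\geq n/2$, together with convexity. Define $\phi(w)\triangleq\binom{w}{k}$ on the integers $0\le w\le n$. It is nondecreasing, and it is discretely convex, since $\phi(w+1)-\phi(w)=\binom{w}{k-1}$ is nondecreasing in $w$; the case $k=0$ is trivial. Extend $\phi$ piecewise linearly to a convex nondecreasing function $\tilde\phi$ on $[0,n]$. Jensen then gives
\[
\E[\phi(W)]=\E[\tilde\phi(W)]\geq\tilde\phi(\E W)\geq\tilde\phi(n/2)\geq\phi(\lfloor n/2\rfloor).
\]
The last inequality uses monotonicity and $n/2\geq\lfloor n/2\rfloor$. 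This completes the argument.

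The only delicate point is the convexity step. The continuous polynomial $w\mapsto w(w-1)\cdots(w-k+1)/k!$ is not convex or monotone below $k-1$, so we must work with the piecewise-linear interpolation of the integer values rather than the polynomial itself. Once that is set up, the rest is bookkeeping, and the condition $k\le\lfloor n/2\rfloor$ ensures the bound is nontrivial.
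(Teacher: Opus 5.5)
Your proof is correct and follows essentially the same route as the paper. The same double count of pairs $(I,(\bx,\by))$ reduces the claim to $\E\bigl[\binom{W}{k}\bigr]\geq\binom{\lfloor n/2\rfloor}{k}$. Where you apply Jensen to the piecewise-linear interpolation of $w\mapsto\binom{w}{k}$, the paper instead verifies by hand, via Pascal's identity, the supporting-line inequality $\binom{w}{k}\geq\binom{\lfloor n/2\rfloor}{k}+(w-\lfloor n/2\rfloor)\binom{\lfloor n/2\rfloor}{k-1}$; this is exactly the discrete convexity you use, and both arguments finish with $\E[W]\geq n/2\geq\lfloor n/2\rfloor$ and the nonnegativity of the slope.
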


\begin{proof}
For notational simplicity, we define the following set:
\begin{align}
\mathcal{Q}_k
\triangleq
\Big\{
(I,\bx,\by)\in 2^{[n]}\times\mathcal{X}\times\mathcal{Y}:~|I|=k,\,
(\bx,\by)\in\Delta_I
\Big\},\label{def-S-k}
\end{align}
where $2^{[n]}$ denotes the power set of $[n]$, i.e., $2^{[n]}\triangleq\big\{I:\,I\subseteq[n]\big\}$.
Clearly, we have
\begin{align}
|\mathcal{Q}_k|=
\sum_{I\subseteq[n]\,\text{with}\,|I|=k}
|\Delta_I|.
\label{eq:Sk-count-I}
\end{align}
On the other hand, for every   pair
$(\boldsymbol{x},\boldsymbol{y})
\in\mathcal{X}\times\mathcal{Y}$,
we can see that the number of subsets
$I\subseteq [n]$ satisfying $|I|=k$
and  $\boldsymbol{y}[I]=\boldsymbol{1}-\boldsymbol{x}[I]$
is
\begin{align*}
\binom{
\mathrm{wt}(\boldsymbol{x}\oplus\boldsymbol{y})
}{
k
},
\end{align*}
where we note that  $\boldsymbol{y}[I]=\boldsymbol{1}-\boldsymbol{x}[I]$ if and only if
$\boldsymbol{x}[i]\neq \boldsymbol{y}[i]$ for all $i\in I$, and  use the convention that
$
\binom{a}{b}=0$ whenever $
0\leq a<b.
$
Together with the definition of $\mathcal{Q}_k$ (cf.~\eqref{def-S-k}),   we also have
\begin{align}
|\mathcal Q_k|
&=
\sum_{(\bx,\by)\in\mbX\times\mbY}
\binom{\wt(\bx\oplus\by)}{k}
\nonumber\\
&=|\mbX||\mbY|\cdot
\sum_{(\bx,\by)\in\mbX\times\mbY} \frac{1}{|\mbX||\mbY|}\cdot
\binom{\wt(\bx\oplus\by)}{k}
\nonumber\\
&=
|\mbX||\mbY|\cdot
\E\left[\binom{\wt(\boldsymbol{X}\oplus\boldsymbol{Y})}{k}\right],
\label{eq:Sk-count-pairs}
\end{align}
where we recall that $\boldsymbol{X}$ and $\boldsymbol{Y}$ are two independent random vectors
uniformly distributed over~$\mathcal{X}$ and $\mathcal{Y}$, respectively.

We note that
the number of subsets $I\subseteq[n]$ satisfying $|I|=k$ is
$\binom{n}{k}$. Then, by \eqref{eq:Sk-count-I}, there exists at least one subset
$I\subseteq[n]$ with $|I|=k$ such that
\begin{align*}
|\Delta_I|
\geq
\frac{~|\mathcal{Q}_k|~}{\binom{n}{k}}=|\mbX||\mbY|\cdot \frac{1}{~\binom{n}{k}~}\cdot
\E\left[\binom{\wt(\boldsymbol{X}\oplus\boldsymbol{Y})}{k}\right],
\end{align*}
where the  equality follows from \eqref{eq:Sk-count-pairs}.
To complete the proof, it suffices to prove that
\begin{align}
\E\left[\binom{\wt(\boldsymbol{X}\oplus\boldsymbol{Y})}{k}\right]
\geq
\binom{\lfloor n/2\rfloor}{k}.\label{pf-n-2}
\end{align}

We first note that the inequality \eqref{pf-n-2} holds for   $k=0$.
Now, we consider an arbitrary positive integer $1\leq k\leq \lfloor n/2\rfloor$.
For notational simplicity, we let $W\triangleq\wt(\boldsymbol{X}\oplus\boldsymbol{Y})$.
Clearly, the random variable $W$ takes values in the set
$\{0,1,\cdots,n\}$. Since $\E\big[\wt(\boldsymbol X\oplus\boldsymbol Y)\big]\geq  n/2$ (cf.~\eqref{eq:section-balanced-assumption1}), we   have
$\E[W]\geq n/2$.
Define
\begin{align*}
f_k(w)
\triangleq
\binom wk,\qquad\forall~w=0,1,2,\cdots.
\end{align*}
By Pascal's identity, we have
\begin{align}
f_k(w+1)-f_k(w)
&=
\binom{w+1}{k}-\binom{w}{k}=
\binom{w}{k-1},\qquad\forall~w=0,1,2,\cdots.
\label{eq:pf-coordinate-first-difference}
\end{align}
We claim that
\begin{align}
f_k(w)
\geq
f_k\big(\lfloor n/2\rfloor\big)+
\big(w-\lfloor n/2\rfloor\big)\binom{\lfloor n/2\rfloor}{k-1},\qquad\forall~0\leq w\leq n.
\label{eq:pf-coordinate-linear-bound}
\end{align}
To prove this claim, we consider the following three cases.

\medskip
\noindent
\textbf{Case 1:} $0\leq  w\leq \lfloor n/2\rfloor-1$.
\medskip

For this case, we have
\begin{align}
f_k(w)-f_k\big(\lfloor n/2\rfloor\big)
&=
-\sum_{\ell=w}^{\lfloor n/2\rfloor-1}
\big( f_k(\ell+1)-f_k(\ell)\big)
\nonumber\\
&=
-\sum_{\ell=w}^{\lfloor n/2\rfloor-1}
\binom{\ell}{k-1}
\label{eq:pf-coordinate-small-w-1}\\
&\geq-\sum_{\ell=w}^{\lfloor n/2\rfloor-1} \binom{\lfloor n/2\rfloor}{k-1}\label{eq:pf-coordinate-small-w-2}\\
&=
\big(w-\lfloor n/2\rfloor\big)\binom{\lfloor n/2\rfloor}{k-1},\nonumber
\end{align}
where the equality \eqref{eq:pf-coordinate-small-w-1} follows from
\eqref{eq:pf-coordinate-first-difference}, and the inequality \eqref{eq:pf-coordinate-small-w-2} follows from the fact that
$\binom{\ell}{k-1}\leq
\binom{\lfloor n/2\rfloor}{k-1}$ for any $w\leq  \ell\leq \lfloor n/2\rfloor-1$.

\medskip
\noindent
\textbf{Case 2:} $w=\lfloor n/2\rfloor$.
\medskip

For this case, it is easy to see that the inequality \eqref{eq:pf-coordinate-linear-bound} holds.

\medskip
\noindent
\textbf{Case 3:} $\lfloor n/2\rfloor+1\leq w\leq n$.
\medskip

For this case, we have
\begin{align}
f_k(w)-f_k\big(\lfloor n/2\rfloor\big)
&=
\sum_{\ell=\lfloor n/2\rfloor}^{w-1}
\big( f_k(\ell+1)-f_k(\ell)\big)
\nonumber\\
&=
\sum_{\ell=\lfloor n/2\rfloor}^{w-1}
\binom{\ell}{k-1}
\label{eq:pf-coordinate-large-w-1}\\
&\geq
\sum_{\ell=\lfloor n/2\rfloor}^{w-1}
\binom{\lfloor n/2\rfloor}{k-1}
\label{eq:pf-coordinate-large-w-2}\\
&=
\big(w- \lfloor n/2\rfloor\big)\binom{\lfloor n/2\rfloor}{k-1},\nonumber
\end{align}
where the equality \eqref{eq:pf-coordinate-large-w-1} follows from \eqref{eq:pf-coordinate-first-difference}, and
the inequality \eqref{eq:pf-coordinate-large-w-2} follows because $\binom{\ell}{k-1}\geq
\binom{\lfloor n/2\rfloor}{k-1}$ for any $\lfloor n/2\rfloor\leq  \ell\leq w-1$.

By
\eqref{eq:pf-coordinate-linear-bound}, we further have
\begin{align*}
\E\big[f_k(W)\big]
&\geq\E\bigg[
f_k\big(\lfloor n/2\rfloor\big)+
\big(W-\lfloor n/2\rfloor\big)\binom{\lfloor n/2\rfloor}{k-1}\bigg]\\
&=
f_k\big(\lfloor n/2\rfloor\big) +
\big(\E[W]-\lfloor n/2\rfloor\big)\binom{\lfloor n/2\rfloor}{k-1}
\\
&\geq
f_k\big(\lfloor n/2\rfloor\big),
\end{align*}
where the last inequality follows from
$\E[W]\geq n/2\geq\lfloor n/2\rfloor$.
The lemma is thus proved.
\end{proof}

\begin{lemma}\label{lem:dyadic-section}
For any subset $I\subseteq[n]$,
there exist integers $0\leq i,j\leq n-|I|$ such that
\begin{align}
\big|\mathcal{V}_{I}^{(i,j)}\big|\cdot 2^{i+j}
\geq\frac{|\Delta_{I}|}{4(n-|I|+1)^2}.
\label{eq:dyadic-section-lower}
\end{align}
\end{lemma}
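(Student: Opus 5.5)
Fix $I\subseteq[n]$ and write $m\triangleq n-|I|$. The plan is to decompose $\Delta_I$ according to the common prefix $\boldsymbol{v}=\bx[I]$ and then pigeonhole over dyadic size classes.

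\textbf{Step 1: decompose $\Delta_I$ by prefix.} A pair $(\bx,\by)\in\mbX\times\mbY$ lies in $\Delta_I$ iff $\bx[I]=\boldsymbol{v}$ and $\by[I]=\one-\boldsymbol{v}$ for some $\boldsymbol{v}\in\{0,1\}^{|I|}$. For fixed $\boldsymbol{v}$, the restriction maps $\bx\mapsto\bx[I^{\mathrm c}]$ and $\by\mapsto\by[I^{\mathrm c}]$ are bijections from $\{\bx\in\mbX:\bx[I]=\boldsymbol{v}\}$ onto $\mbX_{I,\boldsymbol{v}}$ and from $\{\by\in\mbY:\by[I]=\one-\boldsymbol{v}\}$ onto $\mbY_{I,\one-\boldsymbol{v}}$. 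Hence
\begin{align*}
|\Delta_I|=\sum_{\boldsymbol{v}\in\{0,1\}^{|I|}}|\mbX_{I,\boldsymbol{v}}|\,|\mbY_{I,\one-\boldsymbol{v}}|,
\end{align*}
and only terms with both factors nonzero contribute.

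\textbf{Step 2: group the nonzero terms by dyadic class.} Both sets are subsets of $\{0,1\}^{m}$, so each nonzero size lies in $[1,2^m]$. Therefore every such $\boldsymbol{v}$ belongs to exactly one $\mathcal V_I^{(i,j)}$ with $0\leq i,j\leq m$, namely $i=\lfloor\log|\mbX_{I,\boldsymbol{v}}|\rfloor$ and $j=\lfloor\log|\mbY_{I,\one-\boldsymbol{v}}|\rfloor$. The case of size exactly $2^m$ gives $i=m$, which is still allowed. For $\boldsymbol{v}\in\mathcal V_I^{(i,j)}$ the term is less than $2^{i+1}2^{j+1}=4\cdot2^{i+j}$. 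This gives
\begin{align*}
|\Delta_I|\leq\sum_{0\leq i,j\leq m}4\,|\mathcal V_I^{(i,j)}|\,2^{i+j}.
\end{align*}

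\textbf{Step 3: pigeonhole.} The sum in Step 2 has $(m+1)^2$ terms, so some $(i,j)$ satisfies $|\mathcal V_I^{(i,j)}|2^{i+j}\geq|\Delta_I|/(4(m+1)^2)$, which is \eqref{eq:dyadic-section-lower}.

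If $\Delta_I=\emptyset$, any $(i,j)$ works trivially. The only step requiring care is the boundary bookkeeping in Step 2, specifically that the index range $0\leq i,j\leq m$ covers every nonzero size including $2^m$; beyond that there is no real obstacle.
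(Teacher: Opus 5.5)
Your proposal is correct and follows essentially the same approach as the paper: decompose $|\Delta_I|$ as $\sum_{\boldsymbol{v}}|\mathcal X_{I,\boldsymbol{v}}||\mathcal Y_{I,\boldsymbol 1-\boldsymbol{v}}|$, partition the contributing $\boldsymbol{v}$ into the dyadic classes $\mathcal V_I^{(i,j)}$ with $0\leq i,j\leq n-|I|$, and pigeonhole over the $(n-|I|+1)^2$ classes using the bound $4\cdot 2^{i+j}$ per term. The only difference is cosmetic: the paper pigeonholes on the class sums first and then applies the $4\cdot2^{i+j}$ bound, whereas you apply the bound first and then pigeonhole.
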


\begin{proof}
If $|\Delta_{I}|=0$, then
\eqref{eq:dyadic-section-lower} holds
for any $0\leq i,j\leq n-|I|$.
Hence, in the following, we only need to consider the case
$|\Delta_{I}|>0$, i.e., there exists a $\boldsymbol{v}\in\{0,1\}^{|I|}$ such that
$\mathcal{X}_{I,\boldsymbol{v}}$ and
$\mathcal{Y}_{I,\boldsymbol{1}-\boldsymbol{v}}$
are both nonempty.

Recalling the definition of $\Delta_I$ in
\eqref{eq:def-NnK-equiv-form}, we have
\begin{align*}
&\Delta_{I}=\big\{
(\boldsymbol{x},\boldsymbol{y})\in\mathcal{X}\times\mathcal{Y}:~\boldsymbol{y}[I]=\boldsymbol{1}-\boldsymbol{x}[I]
\big\}\nonumber\\
&=
\bigsqcup_{\boldsymbol{v}\in\{0,1\}^{|I|}}
\big\{
(\boldsymbol{x},\boldsymbol{y})\in
\mathcal{X} \times\mathcal{Y} :~
\boldsymbol{x}[I]=\boldsymbol{v},~
\boldsymbol{y}[I]=\boldsymbol{1}-\boldsymbol{v}
\big\}\nonumber\\
&=
\bigsqcup_{\boldsymbol{v}\in\{0,1\}^{|I|}}
\big\{
 \boldsymbol{x}\in
\mathcal{X}:~
\boldsymbol{x}[I]=\boldsymbol{v}
\big\}\times \big\{
 \boldsymbol{y} \in \mathcal{Y} :~
\boldsymbol{y}[I]=\boldsymbol{1}-\boldsymbol{v}
\big\},
\end{align*}
where  ``$\,\sqcup\,$'' denotes the disjoint union.
This further implies that
\begin{align}
&|\Delta_{I}|=\sum_{\boldsymbol{v}\in\{0,1\}^{|I|}}
\big|\big\{
 \boldsymbol{x}\in
\mathcal{X}:~
\boldsymbol{x}[I]=\boldsymbol{v}
\big\}\big|\cdot \big|\big\{
 \boldsymbol{y} \in \mathcal{Y} :~
\boldsymbol{y}[I]=\boldsymbol{1}-\boldsymbol{v}
\big\}\big|\nonumber\\
&=\sum_{\boldsymbol{v}\in\{0,1\}^{|I|}}
\big|\big\{\boldsymbol{x}[I^{\mathrm{c}}]:~
\boldsymbol{x}\in\mathcal{X}~\textup{such that}~
\boldsymbol{x}[I]=\boldsymbol{v}\big\}\big|\cdot \big|\big\{\boldsymbol{y}[I^{\mathrm{c}}]:~
\boldsymbol{y}\in\mathcal{Y}~\textup{such that}~
\boldsymbol{y}[I]=\boldsymbol{1}-\boldsymbol{v}\big\}\big|\nonumber\\
&=
\sum_{\boldsymbol{v}\in\{0,1\}^{|I|}}
\big|\mathcal{X}_{I,\boldsymbol{v}}\big|
\big|\mathcal{Y}_{I,\boldsymbol{1}-\boldsymbol{v}}\big|,
\label{pf-dyadic-section-eq1}
\end{align}
where the equality \eqref{pf-dyadic-section-eq1} follows from   the definitions of
$\mathcal X_{I,\boldsymbol v}$ and
$\mathcal Y_{I,\boldsymbol1-\boldsymbol v}$ (cf.~\eqref{eq:def-An-t} and \eqref{eq:def-Bn-t}).
For every
$\boldsymbol{v}\in\{0,1\}^{|I|}$ such that
$\mathcal{X}_{I,\boldsymbol{v}}$ and
$\mathcal{Y}_{I,\boldsymbol{1}-\boldsymbol{v}}$
are both nonempty, since $\mathcal{X}_{I,\boldsymbol{v}}$ and
$\mathcal{Y}_{I,\boldsymbol{1}-\boldsymbol{v}}$
are both subsets of $\{0,1\}^{|I^{\mathrm{c}}|}$ ($=\{0,1\}^{n-|I|}$), we have
\begin{align*}
1
&\leq
\big|\mathcal{X}_{I,\boldsymbol{v}}\big|
\leq
2^{n-|I|}\quad\text{ and }\quad
1 \leq
\big|\mathcal{Y}_{I,\boldsymbol{1}-\boldsymbol{v}}\big|
\leq
2^{n-|I|}.
\end{align*}
Therefore, the nonempty sets
$
\mathcal{V}_{I}^{(i,j)},$ $
0\leq i,j\leq n-|I|$  
form a partition of all
$\boldsymbol{v}\in\{0,1\}^{|I|}$ for which
$\mathcal{X}_{I,\boldsymbol{v}}$ and
$\mathcal{Y}_{I,\boldsymbol{1}-\boldsymbol{v}}$
are both nonempty (see~\eqref{eq:def-Tnij} for the definition of $\mathcal{V}_{I}^{(i,j)}$).
This, together with~\eqref{pf-dyadic-section-eq1}, further implies  that
\begin{align}
|\Delta_{I}|
&=
\sum_{i=0}^{n-|I|}\;
\sum_{j=0}^{n-|I|}\;
\sum_{\boldsymbol{v}\in\mathcal{V}_{I}^{(i,j)}}
\big|\mathcal{X}_{I,\boldsymbol{v}}\big|
\big|\mathcal{Y}_{I,\boldsymbol{1}-\boldsymbol{v}}\big|.
\label{pf-dyadic-section-eq2}
\end{align}
We note that
 there are
$(n-|I|+1)^2$ pairs $(i,j)$ with $
0\leq i,j\leq n-|I|$. It then follows from
\eqref{pf-dyadic-section-eq2} that there exists a pair $(i,j)$ with
$
0\leq i,j\leq n-|I|$
such that
\begin{align}
&\sum_{\boldsymbol{v}\in\mathcal{V}_{I}^{(i,j)}}
\big|\mathcal{X}_{I,\boldsymbol{v}}\big|
\big|\mathcal{Y}_{I,\boldsymbol{1}-\boldsymbol{v}}\big|\geq
\frac{|\Delta_{I}|}{(n-|I|+1)^2}.
\label{pf-dyadic-section-eq3}
\end{align}
Furthermore,
for every
$\boldsymbol{v}\in\mathcal{V}_{I}^{(i,j)}$,
by the definition of
$\mathcal{V}_{I}^{(i,j)}$, we have
\begin{align}
&
4\cdot2^{i+j}=
2^{i+1}\cdot2^{j+1}>
\big|\mathcal{X}_{I,\boldsymbol{v}}\big|
\big|\mathcal{Y}_{I,\boldsymbol{1}-\boldsymbol{v}}\big|.\label{pf-dyadic-section-eq4}
\end{align}
By \eqref{pf-dyadic-section-eq3} and
\eqref{pf-dyadic-section-eq4}, we obtain
\begin{align*}
\big|\mathcal{V}_{I}^{(i,j)}\big|\cdot
4\cdot2^{i+j} &=\sum_{\boldsymbol{v}\in\mathcal{V}_{I}^{(i,j)}}
4\cdot2^{i+j}\\
&>\sum_{\boldsymbol{v}\in\mathcal{V}_{I}^{(i,j)}}
\big|\mathcal{X}_{I,\boldsymbol{v}}\big|
\big|\mathcal{Y}_{I,\boldsymbol{1}-\boldsymbol{v}}\big|\\
&\geq
\frac{|\Delta_{I}|}{(n-|I|+1)^2}.
\end{align*}
This immediately proves \eqref{eq:dyadic-section-lower}, and the lemma is thus proved.
\end{proof}

We now prove Lemma~\ref{lemma:coordinate-dyadic-selection}.

\begin{proof}[Proof of Lemma~\ref{lemma:coordinate-dyadic-selection}]
Fix an arbitrary integer $0\leq k\leq\lfloor n/2\rfloor$. It first follows from Lemma~\ref{lem:coordinate-extraction}
that  there exists a subset
$I\subseteq[n]$ with $|I|=k$ such that
\begin{align}
|\Delta_{I}|
\geq
|\mbX||\mbY|\cdot
\frac{~\binom{\lfloor n/2\rfloor}{k}~}
{\binom nk}.
\label{pf:lemma-eq1}
\end{align}
Further, by Lemma~\ref{lem:dyadic-section},
there exist integers $0\leq i,j\leq n-|I|$ such that
\begin{align}
\big|\mathcal{V}_{I}^{(i,j)}\big|\cdot 2^{i+j}
\geq\frac{|\Delta_{I}|}{4(n-|I|+1)^2}=\frac{|\Delta_{I}|}{4(n-k+1)^2},
\label{pf:lemma-eq2}
\end{align}
where the equality follows from $|I|=k$.
Combining \eqref{pf:lemma-eq1} and \eqref{pf:lemma-eq2}, we obtain
\begin{align*}
\big|\mathcal{V}_{I}^{(i,j)}\big|\cdot 2^{i+j}
\geq\frac{|\Delta_{I}|}{4(n-k+1)^2}\geq
|\mbX||\mbY|\cdot
\frac{\binom{\lfloor n/2\rfloor}{k}}
{\binom nk}\cdot
\frac{1}{4(n-k+1)^2}.
\end{align*}
The lemma is thus proved.
\end{proof}

\section{Proof of Theorem~\ref{thm:main}}
\label{sec:analytic-bound}

In this section,
we  prove Theorem~\ref{thm:main}.  Fix an arbitrary real number $0<\alpha\leq1/4$.
By Proposition~\ref{prop:extracted-triple}, there exists an admissible triple
$(r_0,r_1,r_2)$ for unique-sum systems satisfying
\begin{align}
r_0
\leq
\frac{\alpha}{1-\alpha}\quad\textup{ and }\quad
r_0+r_1+r_2
\geq
\frac{1}{1-\alpha}\bigg(R+\frac12h(2\alpha)-h(\alpha)\bigg).\label{eq:main-proof-total-lower}
\end{align}
Clearly, $
0< \alpha/(1-\alpha)\leq 1/3
$ by  $0<\alpha\leq1/4$.
Applying Proposition~\ref{cor:Gamma-system-bound} with $\beta
=\alpha/(1-\alpha)$,
we have
\begin{align}
r_0+r_1+r_2
\leq
L\left(
\Gamma^{-1}
\Big(
\frac{\alpha}{1-\alpha}
\Big)
\right).
\label{eq:main-proof-total-upper}
\end{align}
By~\eqref{eq:main-proof-total-lower} and
\eqref{eq:main-proof-total-upper}, we obtain
\begin{align}
R
&\leq
h(\alpha)
-\frac12h(2\alpha)
+
(1-\alpha)
L\bigg(
\Gamma^{-1}
\Big(
\frac{\alpha}{1-\alpha}
\Big)
\bigg)\nonumber\\
&=h(\alpha)
-\frac{1}{2}h(2\alpha)
+
(1-\alpha)
\bigg[
h\bigg(
\Gamma^{-1}
\Big(
\frac{\alpha}{1-\alpha}
\Big)
\bigg)
+
1
-
\Gamma^{-1}
\Big(
\frac{\alpha}{1-\alpha}
\Big)
\bigg]=R_{\alpha},
\label{eq:main-alpha-bound}
\end{align}
where the first equality in \eqref{eq:main-alpha-bound}
follows from the definition of  $
 L
$ in~\eqref{eq:def-L}, and the last equality in~\eqref{eq:main-alpha-bound} follows from the definition of $R_{\alpha}$ in~\eqref{def-R-alpha}.
Since \eqref{eq:main-alpha-bound} holds for every
$0<\alpha\leq1/4$, we have
\begin{align*}
R
\leq
\inf_{0<\alpha\leq1/4}R_\alpha.
\end{align*}
The theorem is thus proved.

\end{document}